\theoremstyle{plain}
\newtheorem{theorem}{Theorem}[section]
\newtheorem{lemma}[theorem]{Lemma}
\newtheorem{proposition}[theorem]{Proposition}
\theoremstyle{definition}
\newtheorem{definition}[theorem]{Definition}
\newtheorem{example}{\sc Example}
\theoremstyle{remark}
\theoremstyle{case}
\def\pi{positive implicative }
\date{}
\begin{document}

	\title{\bf {An Application of Maximality to Nilpotent and Finitely Generated  $L$-Subgroups of an $L$-Group}}
	\author{\textbf{Iffat Jahan$^1$ and Ananya Manas $^2$} \\\\ 
		$^{1}$
		Department of Mathematics, Ramjas College\\
		University of Delhi, Delhi, India \\
		ij.umar@yahoo.com \\\\
		$^{2}$Department of Mathematics, \\
		University of Delhi, Delhi, India \\
		anayamanas@gmail.com 
		\\    }
	\date{}
	\maketitle
	\begin{abstract}
		This paper is a continuation of the study on  maximal and Frattini $L$-subgroups of an $L$-group. The normality of the maximal $L$-subgroups of a nilpotent $L$-group is explored. Then, the concept of finitely generated $L$-subgroup is introduced and its relation with maximal condition on $L$-subgroups is established. Thereafter, several results pertaining to the notions of Frattini $L$-subgroup and finitely generated $L$-subgroups have been investigated.  \\\\
		{\bf Keywords:} $L$-algebra; $L$-subgroup; Generated $L$-subgroup; Normal $L$-subgroup; Maximal $L$-subgroup; Frattini $L$-subgroup; Finitely generated $L$-subgroup.
	\end{abstract}
	
\section{Introduction}

In 1965, Lofti A. Zadeh’s pioneering paper \cite{zadeh_fuzzy} laid the foundations of fuzzy logic in order to employ it to approximate reasoning. Ever since its inception, fuzzy sets and fuzzy logic have found applications in various fields-- information theory, linear programming, pattern recognition etc.  In 1971, Rosenfeld \cite{rosenfeld_fuzzy}
introduced the concept of fuzzy groups using the notion of fuzzy sets. Following this, a number of group theoretic concepts have been extended to the fuzzy setting, for example, the notion of set product of two fuzzy subgroups and the notion of normality of a fuzzy subgroup developed by Liu \cite{liu_op}. All of these studies have been carried out within the framework of fuzzy setting. Moreover, in these studies, the parent structure is considered to be an ordinary algebra (group, ring, semigroup etc.). Therefore, almost all the researchers have defined and studied the notion of a fuzzy subgroup of an ordinary group. This approach does not permit the transference of many properties of algebraic structures into the fuzzy setting. This shortcoming can be easily overcome if the parent structure is considered to be a fuzzy group instead of a classical group. Indeed, very few researchers such as Martinez \cite{martinez_fuzzy}, and Malik and Mordeson \cite{malik_pri}  have studied the properties of an $L$-subalgebra of an $L$-algebra. Also, Prajapati and Ajmal \cite{prajapati_max1,prajapati_max2} have defined and explored the notion of maximal $L$-ideals of a fuzzy ring. However, such studies have been very much inadequate in case of $L$-groups. Recently, various concepts of classical group theory have been extended to $L$-setting specially keeping in view their compatibility \cite{ajmal_gen,ajmal_nc,ajmal_nil,ajmal_nor, ajmal_sol}. Moreover, in \cite{jahan_max}, the authors have investigated the maximal and Frattini $L$-subgroups of an $L$-group. The notion of non-generators of an $L$-group has also been introduced and a relation between the Frattini $L$-subgroup and the set of non-generators of an $L$-group has been established. This work is a continuation of these topics.   

We begin our study in section 3, by demonstrating that every maximal $L$-subgroup of a nilpotent $L$-group $\mu$ having the same tip and tail as $\mu$ is normal in $\mu$. To establish this result, the concepts of nilpotent $L$-subgroups \cite{ajmal_nil}, normalizer of an $L$-subgroup of an $L$-group \cite{ajmal_nor} and normal closure of an $L$-subgroup of an $L$-group \cite{ajmal_nc} are used. Thus, this section demonstrates the compatibility of the above mentioned concepts. 

Then, in section 4, the maximal condition on $L$-subgroups is introduced. It is shown that an $L$-group satisfies maximal condition on $L$-subgroups if and only if every proper ascending chain of its $L$-subgroups is finite. Thereafter, the notion of finitely generated $L$-subgroup is established through $L$-points. A relation between the maximal condition on $L$-subgroups and finitely generated $L$-subgroups is also established, like their classical counterparts.

We begin section 5 by showing that if $L$ is an upper well ordered lattice and $\mu$ is a normal $L$-group, then the Frattini $L$-subgroup $\Phi(\mu)$ is normal in $\mu$. Thereafter, we prove that every finitely generated $L$-group, where $L$ is an upper well ordered lattice, has a maximal $L$-subgroup. Moreover, several relations between Frattini $L$-subgroups, finitely generated $L$-subgroups and set product of $L$-subgroups are established.   

\section{Preliminaries}

Throughout this paper, $L = \langle L, \leq, \vee, \wedge  \rangle$ denotes a completely distributive lattice where '$\leq$' denotes the partial ordering on $L$ and '$\vee$'and '$\wedge$' denote, respectively, the join (supremum) and meet (infimum) of the elements of $L$. Moreover, the maximal and minimal elements of $L$ will be denoted by $1$ and $0$, respectively. The concept of completely distributive lattice can be found in any standard text on the subject \cite{gratzer_lattices}. 

The notion of a fuzzy subset of a set was introduced by Zadeh \cite{zadeh_fuzzy} in 1965. In 1967, Goguen \cite{goguen_sets} extended this concept to $L$-fuzzy sets. In this section, we recall the basic definitions and results associated with $L$-subsets that shall be used throughout the rest of this work. These definitions can be found in chapter 1 of \cite{mordeson_comm}.

Let $X$ be a non-empty set. An $L$-subset of $X$ is a function from $X$  into $L$. The set of  $L$-subsets of $X$ is called  the $L$-power set of $X$ and is denoted by $L^X$.  For  $\mu \in L^X, $  the set $ \lbrace\mu(x) \mid x \in X \rbrace$  is called the image of $\mu$  and is denoted by  Im $\mu $. The tip and tail of $ \mu $  are defined as $\bigvee \limits_{x \in X}\mu(x)$ and $\bigwedge \limits_{x \in X}\mu(x)$, respectively. An $L$-subset $\mu$ of $X$ is said to be contained in an $L$-subset $\eta$  of $X$ if  $\mu(x)\leq \eta (x)$ for all $x \in X$. This is denoted by $\mu \subseteq \eta $.  For a family $\lbrace\mu_{i} \mid i \in I \rbrace$  of $L$-subsets in  $X$, where $I$  is a non-empty index set, the union $\bigcup\limits_{i \in I} \mu_{i} $    and the intersection  $\bigcap\limits_{i \in I} \mu_{i} $ of  $\lbrace\mu_{i} \mid i \in I \rbrace$ are, respectively, defined by
\begin{center}
	$\bigcup\limits_{i \in I} \mu_{i}(x)= \bigvee\limits_{i \in I} \mu(x)  $ \quad and \quad $\bigcap\limits_{i \in I} \mu_{i} (x)= \bigwedge\limits_{i \in I} \mu(x) $
\end{center}
for each  $x \in X $. If  $\mu \in L^X $  and  $a \in L $,  then the level  subset $\mu_{a}$ and the strong level subset $\mu_{a}^{>}$ of $\mu$  are defined as
\begin{center}
	$\mu_{a}= \lbrace x \in X \mid \mu (x) \geq a\rbrace$
\end{center}
and 
\[ \mu_{a}^{>} = \{ x \in X \mid \mu(x) > a \} \]
respectively. For $\mu, \nu \in L^{X} $, it can be verified easily that if $\mu\subseteq \nu$, then $\mu_{a} \subseteq \nu_{a} $ for each $a\in L $.

For $a\in L$ and $x \in X$, we define $a_{x} \in L^{X} $ as follows: for all $y \in X$,
\[
a_{x} ( y ) =
\begin{cases}
a &\text{if} \ y = x,\\
0 &\text{if} \ y\ne x.
\end{cases}
\]
$a_{x} $ is referred to as an $L$-point or $L$-singleton. We say that $a_{x} $ is an $L$-point of $\mu$ if and only if
$\mu( x )\ge a$ and we write $a_{x} \in \mu$. 

Let $S$ be a groupoid. The set product $\mu \circ \eta$   of $\mu, \eta \in L^S$ is an $L$-subset of $S$ defined by
\begin{center}
	$\mu \circ \eta (x) = \bigvee \limits_{x=yz}\lbrace\mu (y) \wedge \eta (z) \rbrace.$
\end{center}

\noindent Note that if $x$ cannot be factored as  $x=yz$  in $S$, then  $\mu \circ \eta (x)$, being  the least upper bound of the empty set, is zero. It can be verified that the set product is associative in  $L^S$  if $S$ is a semigroup.

Let $f$ be a mapping from a set $X$ to a set $Y$. If $\mu \in L ^{X}$ and $\nu \in L^{Y}$, then the image $f(\mu )$
of $\mu $ under $f$ and the preimage $f^{-1} (\nu )$ of $\nu $ under $f$ are $L$-subsets of $Y$ and $X$ respectively, defined by
\[
f(\mu )(y)=\bigvee\limits_{x\in f^{-1} (y)} \{\mu (x)\} \quad \text{ and } \quad f^{-1} (\nu )(x)=\nu (f(x)).
\]
Again,  if $f^{-1} (y)=\phi $,
then $f(\mu )(y)$ being the least upper bound of the empty set, is zero.

\begin{proposition}(\cite{mordeson_comm}, Theorem 1.1.12)
	\label{hom}
	Let $f : X \rightarrow Y$ be a mapping.
	\begin{enumerate}
		\item[({i})] Let $\{ \mu_i \}_{i \in I}$ be a family of $L$-subsets of $X$. Then, $f(\mathop{\cup}\limits_{i \in I} \mu_i) = \mathop{\cup}\limits_{i \in I} f(\mu_i)$ and $f(\mathop{\cap}\limits_{i \in I} \mu_i) \subseteq \mathop{\cap}\limits_{i \in I}f(\mu_i)$.
		\item[({ii})] Let $\mu \in L^X$. Then, $f^{-1}(f(\mu)) \supseteq \mu$. The equality holds if $f$ is injective.
		\item[{(iii)}] Let  $\nu \in L^Y$. Then, $f(f^{-1}(\nu)) \subseteq \nu$. The equality holds if $f$ is surjective.
		\item[{(iv)}] Let $\mu \in L^X$ and $\nu \in L^Y$. Then, $f(\mu) \subseteq \nu$ if and only if $\mu \subseteq f^{-1}(\nu)$. Moreover, if $f$ is injective, then $f^{-1}(\nu) \subseteq \mu$ if and only if $\nu \subseteq f(\mu)$.
	\end{enumerate}
\end{proposition}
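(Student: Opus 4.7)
The plan is to verify each of the four parts by direct pointwise computation from the definitions of image, preimage, union, intersection, and $L$-subset containment given in the preliminaries. Since $f^{-1}(\nu)$ is defined by the simple composition $f^{-1}(\nu)(x) = \nu(f(x))$, while $f(\mu)$ requires a supremum over the fiber $f^{-1}(y)$, the arguments split naturally: wherever only $f^{-1}$ appears, the identity is essentially immediate; wherever $f$ appears, one needs to interchange a supremum over a fiber with a supremum or infimum over the index set $I$, and to handle the empty-fiber convention $\bigvee \emptyset = 0$.

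For (i), I would fix $y \in Y$ and unfold $f(\bigcup_i \mu_i)(y) = \bigvee_{x \in f^{-1}(y)} \bigvee_{i \in I} \mu_i(x)$; associativity of joins lets me swap the two suprema, yielding $\bigvee_{i \in I} f(\mu_i)(y)$. The same unfolding for the intersection gives a supremum of infima, which is bounded above by the corresponding infimum of suprema, delivering the stated containment. For (ii), at any $x \in X$ one has $f^{-1}(f(\mu))(x) = f(\mu)(f(x)) = \bigvee_{x' \in f^{-1}(f(x))} \mu(x')$, a join over a set containing $x$ itself, hence at least $\mu(x)$; if $f$ is injective the fiber is $\{x\}$ and equality is immediate. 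For (iii), at any $y \in Y$ every $x \in f^{-1}(y)$ satisfies $f^{-1}(\nu)(x) = \nu(y)$, so the join $f(f^{-1}(\nu))(y)$ is either $\nu(y)$ or $0$ according as the fiber is non-empty or empty, yielding the containment in general and equality under surjectivity. Finally, (iv) follows by combining (ii) and (iii) with the evident monotonicity of $f$ and $f^{-1}$: if $f(\mu) \subseteq \nu$, apply $f^{-1}$ and use (ii); conversely apply $f$ and use (iii). The second assertion of (iv) deploys injectivity precisely where (ii) needed it.

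No genuine obstacle is expected. The only mild subtleties are the bookkeeping around empty fibers (for which the convention $\bigvee \emptyset = 0$ suffices uniformly) and the observation that in any complete lattice $\bigvee_{x} \bigwedge_{i}$ is dominated by $\bigwedge_{i} \bigvee_{x}$, which forces (i) to give only a containment for intersections and explains why surjectivity is needed in (iii). Complete distributivity of $L$ is not actually invoked here; only the elementary algebra of $\bigvee$ and $\bigwedge$ is used.
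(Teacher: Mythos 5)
Your handling of (i)--(iii) and of the first biconditional in (iv) is correct and is the standard pointwise computation; note that the paper offers no proof of this proposition (it is quoted from Theorem 1.1.12 of \cite{mordeson_comm}), so there is no in-paper argument to compare against, and your side remarks are accurate: only complete-lattice algebra is needed (complete distributivity is never invoked), interchanging two suprema is legitimate in any complete lattice, the inequality $\bigvee_x \bigwedge_i \leq \bigwedge_i \bigvee_x$ is exactly what restricts the intersection case of (i) to a containment, and the empty-fiber convention $\bigvee \emptyset = 0$ handles $y \notin f(X)$ uniformly. The Galois-connection argument for the first part of (iv) via (ii), (iii) and monotonicity of $f$ and $f^{-1}$ is also sound.

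There is, however, a genuine gap in your last claim. For the second assertion of (iv), the sentence ``deploys injectivity precisely where (ii) needed it'' covers only the backward direction: from $\nu \subseteq f(\mu)$, monotonicity of $f^{-1}$ and the equality case of (ii) give $f^{-1}(\nu) \subseteq f^{-1}(f(\mu)) = \mu$. The forward direction, $f^{-1}(\nu) \subseteq \mu \Rightarrow \nu \subseteq f(\mu)$, cannot be obtained by your symmetric recipe of ``applying $f$'', because (iii) bounds $f(f^{-1}(\nu))$ \emph{above} by $\nu$, with equality only under surjectivity --- and in fact this direction is false for injective, non-surjective $f$, since $\nu$ may be nonzero off the image of $f$, where $f(\mu)$ vanishes by the empty-fiber convention. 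Concretely, take $X = \{x\}$, $Y = \{y_1, y_2\}$, $f(x) = y_1$, $\mu(x) = 1$, $\nu(y_1) = 0$, $\nu(y_2) = 1$: then $f^{-1}(\nu)(x) = 0 \leq \mu(x)$, so $f^{-1}(\nu) \subseteq \mu$, yet $\nu(y_2) = 1 > 0 = f(\mu)(y_2)$. So the ``moreover'' clause as printed holds as a biconditional only if $f$ is also surjective (the forward direction then follows from the equality case of (iii), even without injectivity), or if one additionally assumes $\nu$ vanishes outside $f(X)$; presumably the source's hypotheses differ from the quoted statement. A complete write-up should either prove only the backward direction under injectivity or flag the extra hypothesis needed for the forward one, rather than passing over it.
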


Throughout this paper, $G$ denotes an ordinary group with the identity element `$e$' and $I$ denotes a non-empty indexing set. Also, $1_A$ denotes the characteristic function of a non-empty set $A$.

In 1971, Rosenfeld \cite{rosenfeld_fuzzy} applied the notion of fuzzy sets to groups to introduce the fuzzy subgroup of a group. Liu \cite{liu_op}, in 1981, extended the notion of fuzzy subgroups to $L$-fuzzy subgroups ($L$-subgroups), which we define below.   

\begin{definition}(\cite{rosenfeld_fuzzy})
	Let $\mu \in L ^G $. Then, $\mu $ is called an $L$-subgroup of $G$ if for each $x, y\in G$,
	\begin{enumerate}
		\item[({i})] $\mu (xy)\ge \mu (x)\wedge \mu (y)$,
		\item[({ii})] $\mu (x^{-1} )=\mu (x)$.
	\end{enumerate}
	The set of $L$-subgroups of $G$ is denoted by $L(G)$. Clearly, the tip of an $L$-subgroup
	is attained at the identity element of $G$.
\end{definition}

\begin{theorem}(\cite{ajmal_homo})
	\label{lev_gp}
	Let $\mu \in L ^G $. Then, $\mu $ is an $L$-subgroup of $G$ if and only if each non-empty level subset $\mu_{a} $ is a subgroup of $G$.
\end{theorem}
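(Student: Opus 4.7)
The plan is to prove both directions by the standard level-set argument.

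For the forward direction, assume $\mu$ is an $L$-subgroup of $G$, and let $\mu_a$ be a non-empty level subset. I would take $x, y \in \mu_a$ and verify the subgroup criterion $xy^{-1} \in \mu_a$. Using condition (ii) of the definition, $\mu(y^{-1}) = \mu(y) \ge a$, and then condition (i) gives $\mu(xy^{-1}) \ge \mu(x) \wedge \mu(y^{-1}) \ge a$, so $xy^{-1} \in \mu_a$. Since $\mu_a$ is non-empty, it follows that $\mu_a$ is a subgroup of $G$.

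For the converse, assume every non-empty level subset is a subgroup. To verify condition (i), fix $x, y \in G$ and set $a = \mu(x) \wedge \mu(y)$. Then $x, y \in \mu_a$, and since $\mu_a$ is a subgroup, $xy \in \mu_a$, giving $\mu(xy) \ge a = \mu(x) \wedge \mu(y)$. For condition (ii), set $b = \mu(x)$. Then $x \in \mu_b$, so $x^{-1} \in \mu_b$, yielding $\mu(x^{-1}) \ge \mu(x)$. Applying the same reasoning to $x^{-1}$ in place of $x$ gives the reverse inequality, hence $\mu(x^{-1}) = \mu(x)$.

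There is no real obstacle here — the result is a direct translation between pointwise conditions on $\mu$ and subgroup-theoretic conditions on its level sets, and the only subtle point worth stating carefully is that non-emptiness of $\mu_a$ (combined with closure under $xy^{-1}$) is enough to conclude it is a subgroup without separately exhibiting the identity.
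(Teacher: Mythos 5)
Your proof is correct and follows the standard level-set argument; the paper itself states this result without proof, citing \cite{ajmal_homo}, and your two-direction argument is precisely the classical one from that source. The subtle points are handled properly: non-emptiness of $\mu_a$ together with closure under $xy^{-1}$ does yield a subgroup, and since your converse uses only the meet $\mu(x)\wedge\mu(y)$, the argument is valid over an arbitrary completely distributive lattice $L$, not merely a chain (in contrast to the strong-level-set characterization in Theorem \ref{lev_sgp}(ii), which requires $L$ to be a chain).
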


\begin{theorem}(\cite{ajmal_homo})
	\label{hom_gp}
	Let $f : G \rightarrow H$ be a group homomorphism. Let $\mu \in L(G)$ and $\nu \in L(H)$. Then, $f(\mu) \in L(H)$ and $f^{-1}(\nu) \in L(G)$.
\end{theorem}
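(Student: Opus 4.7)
The plan is to verify the two defining conditions of an $L$-subgroup directly from the definitions of image and preimage, using only the homomorphism property of $f$, the fact that $\mu$ and $\nu$ are $L$-subgroups, and complete distributivity of $L$.

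The preimage part is essentially transparent. Since $f^{-1}(\nu)(x) = \nu(f(x))$ by definition, and $f$ is a homomorphism, I would just compute:
\[ f^{-1}(\nu)(xy) = \nu\bigl(f(x)f(y)\bigr) \ge \nu(f(x)) \wedge \nu(f(y)) = f^{-1}(\nu)(x) \wedge f^{-1}(\nu)(y), \]
and $f^{-1}(\nu)(x^{-1}) = \nu(f(x)^{-1}) = \nu(f(x)) = f^{-1}(\nu)(x)$. This disposes of $f^{-1}(\nu) \in L(G)$.

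The image part is where the real work lies. For $a,b \in H$, I would first dispense with the trivial case in which $f^{-1}(a) = \phi$ or $f^{-1}(b) = \phi$, where the inequality in (i) is automatic because one of $f(\mu)(a)$, $f(\mu)(b)$ is $0$. Otherwise, expand using complete distributivity of $L$:
\[ f(\mu)(a) \wedge f(\mu)(b) = \Bigl(\bigvee_{x \in f^{-1}(a)} \mu(x)\Bigr) \wedge \Bigl(\bigvee_{y \in f^{-1}(b)} \mu(y)\Bigr) = \bigvee_{\substack{x \in f^{-1}(a)\\ y \in f^{-1}(b)}} \bigl(\mu(x) \wedge \mu(y)\bigr). \]
For each such pair, $f(xy) = f(x)f(y) = ab$, so $xy \in f^{-1}(ab)$, and the $L$-subgroup condition on $\mu$ gives $\mu(x) \wedge \mu(y) \le \mu(xy) \le f(\mu)(ab)$. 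Taking the supremum yields (i). For (ii), reindex using the bijection $z \mapsto z^{-1}$ between $f^{-1}(a^{-1})$ and $f^{-1}(a)$:
\[ f(\mu)(a^{-1}) = \bigvee_{z \in f^{-1}(a^{-1})} \mu(z) = \bigvee_{w \in f^{-1}(a)} \mu(w^{-1}) = \bigvee_{w \in f^{-1}(a)} \mu(w) = f(\mu)(a), \]
the penultimate equality using condition (ii) on $\mu$.

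The only subtle step is the interchange of meet and arbitrary join in the displayed equation above; this is precisely where the standing hypothesis that $L$ is completely distributive is used. Apart from this, the argument is a routine unpacking of definitions, and I would not expect any genuine obstacle.
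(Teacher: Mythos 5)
Your proof is correct and complete: the empty-preimage case, the interchange of $\wedge$ with the arbitrary joins defining $f(\mu)$ (for which the infinite meet-distributive law implied by complete distributivity suffices), and the reindexing via the bijection $z \mapsto z^{-1}$ between $f^{-1}(a^{-1})$ and $f^{-1}(a)$ are all handled properly. Note that the paper states this theorem without proof, simply citing \cite{ajmal_homo}; your direct verification of the two defining conditions is the standard argument for this result, so there is no divergence from the paper to report.
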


It is well known in literature that the intersection of an arbitrary family of $L$-subgroups of a group is an $L$-subgroup of the given group.

\begin{definition}(\cite{rosenfeld_fuzzy})
	Let $\mu \in L ^G $. Then, the $L$-subgroup of $G$ generated by $\mu $ is defined as the smallest $L$-subgroup of $G$
	which contains $\mu $. It is denoted by $\langle \mu \rangle $, that is,
	\[
	\langle \mu \rangle =\cap\{\mu _{{i}} \in L(G) \mid \mu \subseteq \mu _{i}\}.
	\]
\end{definition}

The concept of normal fuzzy subgroup of a group was introduced by Liu \cite{liu_inv} in 1982. We define the normal $L$-subgroup of a group $G$ below:  

\begin{definition}(\cite{liu_inv})
	Let $\mu\in L(G)$. Then, $\mu $ is called a normal $L$-subgroup of $G$ if for all $x, y \in  G$, $\mu ( xy ) = \mu ( yx )$.
\end{definition}

\noindent The set of normal $L$-subgroups of $G$ is denoted by $NL(G)$. 

\begin{theorem}(\cite{mordeson_comm}, Theorem 1.3.3)
	\label{lev_norgp}
	Let $\mu \in L{(G)}$. Then, $\mu \in NL(G)$ if and only if each non-empty level subset $\mu_a$ is a normal subgroup of $G$.	
\end{theorem}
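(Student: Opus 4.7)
The plan is to prove both directions via the level-subset characterization of $L$-subgroups (Theorem \ref{lev_gp}) combined with elementary manipulations. Since $\mu \in L(G)$ is given on both sides of the equivalence, every non-empty $\mu_a$ is automatically a subgroup of $G$; only the normality of these subgroups needs to be established (or, going the other way, only the identity $\mu(xy)=\mu(yx)$ needs to be extracted).

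For the forward direction, I would fix $a \in L$ with $\mu_a \neq \emptyset$, take $x \in \mu_a$ and $g \in G$, and compute $\mu(gxg^{-1})$ by applying the normality condition $\mu(uv)=\mu(vu)$ with $u = g$ and $v = xg^{-1}$. This yields $\mu(gxg^{-1}) = \mu(xg^{-1}g) = \mu(x) \geq a$, so $gxg^{-1} \in \mu_a$. Thus every non-empty level subset is closed under conjugation and is therefore a normal subgroup of $G$.

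For the converse, I would fix arbitrary $x, y \in G$ and set $a = \mu(xy)$, so that $xy \in \mu_a$. Normality of $\mu_a$ gives $y(xy)y^{-1} \in \mu_a$, and since $y(xy)y^{-1} = yx$, we obtain $\mu(yx) \geq a = \mu(xy)$. Interchanging the roles of $x$ and $y$ yields the reverse inequality, giving $\mu(xy)=\mu(yx)$ for all $x,y \in G$, i.e., $\mu \in NL(G)$.

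There is no real obstacle here; the argument is a direct translation between the pointwise commutativity condition on $\mu$ and the classical conjugation-closure condition on its subgroups. The only care needed is notational: choosing the correct level $a = \mu(xy)$ in the converse so that $xy$ actually belongs to $\mu_a$, and recognising the conjugation identity $y(xy)y^{-1}=yx$ as the precise group-theoretic fact that makes the level-wise normality transfer to the functional identity.
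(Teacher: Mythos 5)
Your proof is correct; the paper states this result without proof, citing it as Theorem 1.3.3 of Mordeson--Bhutani--Rosenfeld, and your argument is exactly the standard level-subset translation used there. Both directions check out, including the two points that need care: choosing $a=\mu(xy)$ so that $xy\in\mu_a$ makes the level non-empty, and the identity $y(xy)y^{-1}=yx$ converts conjugation-closure of the levels into the functional condition $\mu(xy)=\mu(yx)$.
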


Let $\eta, \mu\in L^{{G}}$ such that $\eta\subseteq\mu$. Then, $\eta$ is said to be an $L$-subset of $\mu$. The set of all $L$-subsets of $\mu$ is denoted by $L^{\mu}.$
Moreover, if $\eta,\mu\in L(G)$ such that  $\eta\subseteq \mu$, then $\eta$ is said to be an $L$-subgroup of $\mu$. The set of all $L$-subgroups of $\mu$ is denoted by $L(\mu)$.

From now onwards, $\mu$ denotes an $L$-subgroup of $G$ which shall be considered as the parent $L$-group. In fact, $\mu$ is an $L$-subgroup of $G$ if and only if $\mu$ is an $L$-subgroup of $1_G$.

\begin{definition}(\cite{ajmal_sol}) 
	Let $\eta\in L(\mu)$ such that $\eta$ is non-constant and $\eta\ne\mu$. Then, $\eta$ is said to be a proper $L$-subgroup of $\mu$.
\end{definition}

\noindent Clearly, $\eta$ is a proper $L$-subgroup of $\mu$ if and only if $\eta$ has distinct tip and tail and $\eta\ne\mu$.

\begin{definition}(\cite{ajmal_nil})
	Let $\eta \in L(\mu)$. Let $a_0$ and $t_0$ denote the tip and tail of $\eta$, respectively. We define the trivial $L$-subgroup of $\eta$ as follows:
	\[ \eta_{t_0}^{a_0}(x) = \begin{cases}
	a_0 & \text{if } x=e,\\
	t_0 & \text{if } x \neq e.
	\end{cases} \]
\end{definition}

\begin{theorem}(\cite{ajmal_nil}, Theorem 2.1)
	\label{lev_sgp}
	Let $\eta \in L^\mu$. Then, 
	\begin{enumerate}
		\item[({i})] $\eta\in L(\mu)$ if and only if each non-empty level subset $\eta_a$  is a subgroup of $\mu_a$.
		\item[({ii})] $\eta \in L(\mu)$ if and only if each non-empty strong level subset $\eta_{a}^{>}$ is a subgroup of $\mu_{a}^{>}$, provided $L$ is a chain. 
	\end{enumerate}	
\end{theorem}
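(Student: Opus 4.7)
The plan is to derive both parts from Theorem \ref{lev_gp} combined with the containment $\eta\subseteq\mu$ built into $\eta\in L^{\mu}$, so the bulk of the work just tracks how the (strong) level subsets of $\eta$ and $\mu$ interact under this inclusion.

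For part (i), observe that $\eta\in L(\mu)$ is just $\eta\in L(G)$ together with $\eta\subseteq\mu$. In the forward direction, Theorem \ref{lev_gp} makes every nonempty $\eta_a$ a subgroup of $G$, and the inclusion $\eta\subseteq\mu$ forces $\eta_a\subseteq\mu_a$; since $\mu_a$ is itself a subgroup of $G$ by Theorem \ref{lev_gp}, $\eta_a$ becomes a subgroup of $\mu_a$. Conversely, if every nonempty $\eta_a$ is a subgroup of $\mu_a$, it is in particular a subgroup of $G$, so Theorem \ref{lev_gp} returns $\eta\in L(G)$; together with the standing assumption $\eta\in L^{\mu}$ this yields $\eta\in L(\mu)$.

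For part (ii), I would establish the strong-level analogue of Theorem \ref{lev_gp} under the chain hypothesis and then mimic the argument above. The forward direction uses the chain property that $b>a$ and $c>a$ imply $b\wedge c>a$: for $x,y\in\eta_a^{>}$ the inequality $\eta(xy^{-1})\ge\eta(x)\wedge\eta(y)>a$ puts $xy^{-1}$ in $\eta_a^{>}$, and $\eta\subseteq\mu$ upgrades this to a subgroup of $\mu_a^{>}$. For the converse, fix $x,y\in G$, set $t=\eta(x)\wedge\eta(y)$, and suppose $\eta(xy)<t$. Choosing $c=\eta(xy)$ gives $\eta(x),\eta(y)>c$, so $x,y\in\eta_c^{>}$; the subgroup hypothesis then forces $xy\in\eta_c^{>}$, i.e.\ $\eta(xy)>c=\eta(xy)$, a contradiction. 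The inverse axiom $\eta(x^{-1})=\eta(x)$ is handled by the identical contradiction trick applied to $c=\min\{\eta(x),\eta(x^{-1})\}$.

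The only delicate point is that the reverse direction of (ii) genuinely depends on $L$ being a chain: without totality, the passage from $\eta(x),\eta(y)>c$ to $\eta(x)\wedge\eta(y)>c$ can fail, and the clean pick $c=\eta(xy)$ no longer delivers the contradiction. Once this is respected, the remainder is routine bookkeeping on top of Theorem \ref{lev_gp}, with $\eta\subseteq\mu$ automatically converting ``subgroup of $G$'' into ``subgroup of $\mu_a$'' or ``subgroup of $\mu_a^{>}$'' at every step.
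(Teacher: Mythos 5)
Your proof is correct. Note that the paper itself does not prove this statement: it is recalled verbatim from Ajmal and Jahan (\cite{ajmal_nil}, Theorem 2.1) as a preliminary, so there is no in-paper proof to compare against; your argument --- reducing everything to Theorem \ref{lev_gp} together with the observation that $\eta\subseteq\mu$ forces $\eta_a\subseteq\mu_a$ and $\eta_a^{>}\subseteq\mu_a^{>}$, then handling the strong-level case by the contradiction trick with $c=\eta(xy)$ --- is the standard level-subset argument one would expect, and all steps check out (including the implicit point that $\mu_a$, resp.\ $\mu_a^{>}$, is nonempty and a subgroup of $G$ whenever $\eta_a$, resp.\ $\eta_a^{>}$, is nonempty). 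One small bookkeeping remark: both directions of (ii) use the chain hypothesis, not just the reverse one, and your diagnosis slightly misplaces where it enters --- the passage from $\eta(x),\eta(y)>c$ to $\eta(x)\wedge\eta(y)>c$ is exactly what the \emph{forward} direction needs (and you do invoke it there), while the \emph{reverse} direction needs totality to convert the refuted alternative $\eta(xy)<t$ into the desired conclusion $\eta(xy)\geq t$, since in a general lattice $\eta(xy)$ and $t$ could be incomparable and the pick $c=\eta(xy)$ would not satisfy $c<t$. This does not affect the validity of your proof, only the commentary.
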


The normal fuzzy subgroup of a fuzzy group was introduced by Wu \cite{wu_normal} in 1981. We note that for the development of this concept, Wu \cite{wu_normal} preferred the $L$-setting. Below, we recall the notion of a normal $L$-subgroup of an $L$-group:

\begin{definition}(\cite{wu_normal})
	Let $\eta \in L(\mu)$. Then, we say that  $\eta$  is a normal $L$-subgroup of $\mu$   if  
	\begin{center}
		$\eta(yxy^{-1}) \geq \eta(x)\wedge \mu(y)$ for  all  $x,y \in G.$
	\end{center}
\end{definition}

\noindent The set of normal $L$-subgroups of $\mu$  is denoted by $NL(\mu)$. If $\eta \in NL(\mu)$, then we write\vspace{.2cm} $ \eta \triangleleft \mu$. 

Here, we mention that the arbitrary intersection of a family of normal $L$-subgroups of an $L$-group $\mu$ is again a normal $L$-subgroup of $\mu$. 

\begin{theorem}(\cite{ajmal_char})
	\label{lev_norsgp}
	Let $\eta \in L(\mu)$. Then, $\eta\in NL(\mu)$ if and only if each non-empty level subset $\eta_a$  is a normal subgroup of $\mu_a$.
\end{theorem}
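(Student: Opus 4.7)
The plan is to prove the equivalence by translating the normality inequality $\eta(yxy^{-1}) \geq \eta(x) \wedge \mu(y)$ directly into the normality condition on each level subset, exploiting the characterization $x \in \eta_a \iff \eta(x) \geq a$. Since Theorem \ref{lev_sgp} already guarantees that each non-empty $\eta_a$ is a subgroup of $\mu_a$ whenever $\eta \in L(\mu)$, the only content in either direction is to establish the closure of $\eta_a$ under conjugation by elements of $\mu_a$.

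For the forward direction, I would assume $\eta \in NL(\mu)$ and fix any $a \in L$ with $\eta_a \neq \emptyset$. For arbitrary $x \in \eta_a$ and $y \in \mu_a$, I have $\eta(x) \geq a$ and $\mu(y) \geq a$, so by the defining inequality
\[
\eta(yxy^{-1}) \geq \eta(x) \wedge \mu(y) \geq a,
\]
giving $yxy^{-1} \in \eta_a$. Combined with Theorem \ref{lev_sgp}, this shows $\eta_a$ is a normal subgroup of $\mu_a$.

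For the converse, I would fix arbitrary $x, y \in G$ and set $a := \eta(x) \wedge \mu(y) \in L$. Then $x \in \eta_a$ (so $\eta_a$ is non-empty) and $y \in \mu_a$. By hypothesis $\eta_a \triangleleft \mu_a$, hence $yxy^{-1} \in \eta_a$, which translates exactly to $\eta(yxy^{-1}) \geq a = \eta(x) \wedge \mu(y)$. Since $x, y$ were arbitrary, $\eta \in NL(\mu)$.

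I do not anticipate any real obstacle: the proof is a direct two-line translation in each direction, in the same spirit as Theorem \ref{lev_norgp}. The only minor point worth flagging is that in the reverse direction one must choose the level $a = \eta(x) \wedge \mu(y)$ so that $x$ and $y$ simultaneously lie in $\eta_a$ and $\mu_a$ respectively, and it is automatic that $\eta_a$ is non-empty at this choice so the hypothesis applies.
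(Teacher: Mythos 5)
Your proof is correct; the paper states this theorem without proof (it is recalled from \cite{ajmal_char}), and your two-direction level-subset translation---including the essential choice $a = \eta(x) \wedge \mu(y)$ in the converse so that $x \in \eta_a$ and $y \in \mu_a$ simultaneously---is precisely the standard argument for this characterization. Nothing is missing: the subgroup part is indeed covered by Theorem \ref{lev_sgp} as you note, and no chain or sup-property assumption on $L$ is needed since the meet $\eta(x) \wedge \mu(y)$ always exists.
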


\begin{definition}(\cite{rosenfeld_fuzzy})
	Let $\mu \in L^X$. Then, $\mu$ is said to possess sup-propery if for each $A \subseteq X$, there exists $a_0 \in A$ such that $ \mathop \vee \limits_{a \in A}  {\mu(a) } = \mu(a_0)$. 
\end{definition}

Lastly, recall the following form \cite{ajmal_gen, ajmal_sol}:

\begin{theorem}(\cite{ajmal_gen}, Theorem 3.1)
	\label{gen}
	Let $\eta\in L^{^{\mu}}.$ Let $a_{0}=\mathop {\vee}\limits_{x\in G}{\left\{\eta\left(x\right)\right\}}$ and define an $L$-subset $\hat{\eta}$ of $G$ by
	\begin{center}
		$\hat{\eta}\left(x\right)=\mathop{\vee}\limits_{a \leq a_{0}}{\left\{a \mid x\in\left\langle \eta_{a}\right\rangle\right\}}$.
	\end{center}
	
	\noindent Then, $\hat{\eta}\in L(\mu)$ and  $\hat{\eta} =\left\langle \eta \right\rangle$.
\end{theorem}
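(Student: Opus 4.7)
The plan is to verify that $\hat\eta$ lies in $L(\mu)$, contains $\eta$, and is the smallest $L$-subgroup of $\mu$ with that property, so that it must coincide with $\langle\eta\rangle$. Containment $\hat\eta\subseteq\mu$ comes first: for each $a\le a_0$, the hypothesis $\eta\subseteq\mu$ gives $\eta_a\subseteq\mu_a$, and by Theorem~\ref{lev_gp} each non-empty $\mu_a$ is a subgroup of $G$, so $\langle\eta_a\rangle\subseteq\mu_a$; joining over all admissible $a$ yields $\mu(x)\ge\hat\eta(x)$ for every $x\in G$.

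Next, I would show $\hat\eta\in L(G)$ by checking the two defining axioms. Invariance under inversion is immediate because $x\in\langle\eta_a\rangle$ if and only if $x^{-1}\in\langle\eta_a\rangle$, so the two indexing sets coincide. For closure under products, I would use that the level sets are order-reversing: $a\le b\le a_0$ implies $\eta_a\supseteq\eta_b$, hence $\langle\eta_a\rangle\supseteq\langle\eta_b\rangle$. Thus whenever $x\in\langle\eta_a\rangle$ and $y\in\langle\eta_b\rangle$ with $a,b\le a_0$, both elements lie in $\langle\eta_{a\wedge b}\rangle$, so $xy$ does too, giving $\hat\eta(xy)\ge a\wedge b$. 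Joining this inequality over all admissible pairs $(a,b)$ and invoking the complete distributivity of $L$ to interchange $\bigvee$ with $\wedge$ yields $\hat\eta(xy)\ge\hat\eta(x)\wedge\hat\eta(y)$.

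The inclusion $\eta\subseteq\hat\eta$ follows by plugging $a=\eta(x)$ into the definition: since $x\in\eta_a\subseteq\langle\eta_a\rangle$, the value $a=\eta(x)$ participates in the join, so $\hat\eta(x)\ge\eta(x)$. For minimality, let $\theta\in L(\mu)$ with $\eta\subseteq\theta$; then $\eta_a\subseteq\theta_a$, and Theorem~\ref{lev_gp} says $\theta_a$ is a subgroup of $G$, so $\langle\eta_a\rangle\subseteq\theta_a$, and therefore $\theta(x)\ge a$ whenever $x\in\langle\eta_a\rangle$. Taking the supremum over $a$ yields $\theta\supseteq\hat\eta$, which combined with the preceding steps identifies $\hat\eta$ with $\langle\eta\rangle$. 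The main obstacle is the product axiom: the step in which the pointwise bounds $a\wedge b$ aggregate into $\hat\eta(x)\wedge\hat\eta(y)$ requires distributing $\wedge$ over an arbitrary join, which is precisely why $L$ is assumed to be completely distributive throughout the paper.
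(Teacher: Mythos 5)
The paper does not prove Theorem \ref{gen} at all --- it is imported verbatim from \cite{ajmal_gen} as a preliminary --- so there is no in-text proof to compare against; judged on its own, your argument is correct and is the standard one for this result: direct verification of the two $L$-subgroup axioms for $\hat{\eta}$ (with complete distributivity rightly identified as the ingredient that turns the pointwise bounds $\hat{\eta}(xy)\ge a\wedge b$ into $\hat{\eta}(xy)\ge\hat{\eta}(x)\wedge\hat{\eta}(y)$), containment $\eta\subseteq\hat{\eta}$ via $a=\eta(x)$, and minimality via level sets, which together force $\hat{\eta}=\langle\eta\rangle$. The only detail worth pinning down is the indices $a\le a_0$ with $\eta_a=\emptyset$ (under the convention $\langle\emptyset\rangle=\{e\}$): these cause no harm, since in the containment step $\mu(e)\ge a_0\ge a$, and in the minimality step any $\theta\in L(\mu)$ with $\eta\subseteq\theta$ attains its tip at $e$, so $\theta(e)\ge a_0\ge a$.
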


\begin{theorem}(\cite{ajmal_gen}, Theorem 3.7)
	\label{gen_sup}
	Let $\eta \in L^{\mu}$ and possesses the sup-property. If $a_0 = \mathop{\vee}\limits_{x \in G}\{\eta(x)\}$, then for all $b \leq a_0$, $\langle \eta_b \rangle = \langle \eta \rangle_b$.
\end{theorem}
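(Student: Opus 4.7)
The plan is to invoke Theorem \ref{gen}, which identifies $\langle \eta \rangle$ with the $L$-subset $\hat{\eta}$ given by $\hat{\eta}(x) = \bigvee_{a \leq a_0}\{a \mid x \in \langle \eta_a \rangle\}$, and then establish each inclusion of $\langle \eta_b \rangle = \langle \eta \rangle_b$ separately. The two directions have very different characters: one is a formal observation, while the other is precisely where the sup-property must be used.

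The inclusion $\langle \eta_b \rangle \subseteq \langle \eta \rangle_b$ is immediate and requires no hypothesis beyond $b \leq a_0$. Indeed, if $x \in \langle \eta_b \rangle$, then $b$ itself lies in the set over which the supremum defining $\hat{\eta}(x)$ is taken, so $\hat{\eta}(x) \geq b$, and hence $x \in \hat{\eta}_b = \langle \eta \rangle_b$.

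For the reverse inclusion $\langle \eta \rangle_b \subseteq \langle \eta_b \rangle$, the goal is to convert the condition $\hat{\eta}(x) \geq b$ into a concrete factorization of $x$ whose factors all lie in $\eta_b$. Using the classical description of the subgroup of $G$ generated by a subset, $x \in \langle \eta_a \rangle$ holds iff $x$ admits a decomposition $x = y_1^{\epsilon_1} \cdots y_n^{\epsilon_n}$ with $\eta(y_i) \geq a$ for every $i$. Substituting this characterization into the formula for $\hat{\eta}$, I would rewrite
\[
\hat{\eta}(x) = \bigvee \bigl\{ \textstyle\bigwedge_{i=1}^n \eta(y_i) \mid x = y_1^{\epsilon_1} \cdots y_n^{\epsilon_n} \bigr\}.
\]
Since each inner meet is finite, it is attained at some $\eta(y_{j^{*}})$ depending on the chosen factorization. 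Collecting these attaining factors into a set $Y_x \subseteq G$, the expression simplifies to $\hat{\eta}(x) = \bigvee_{y \in Y_x} \eta(y)$.

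At this point the sup-property of $\eta$ applies directly to $Y_x$, producing some $y^{*} \in Y_x$ with $\eta(y^{*}) = \hat{\eta}(x) \geq b$. By the definition of $Y_x$, this $y^{*}$ realizes the meet $\bigwedge_i \eta(y_i)$ in some specific factorization $x = y_1^{\epsilon_1} \cdots y_n^{\epsilon_n}$, so every factor in that factorization satisfies $\eta(y_i) \geq \eta(y^{*}) \geq b$, i.e.\ $y_i \in \eta_b$. This forces $x \in \langle \eta_b \rangle$, closing the argument. I expect the main obstacle to be the careful translation of the abstract lattice supremum $\bigvee_{a \leq a_0}\{a \mid x \in \langle \eta_a \rangle\}$ into a supremum of $\eta$-values over an actual subset of $G$; the sup-property is an assertion about $\eta$ on subsets of $G$, so one must exploit the finiteness of each individual meet in a single factorization to bring it into the proper form.
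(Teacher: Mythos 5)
The paper itself offers no proof of this statement: it is recalled verbatim from \cite{ajmal_gen} (Theorem 3.7) as a preliminary, so your attempt can only be compared with the standard argument from that source, whose route you essentially follow --- the easy inclusion $\langle \eta_b \rangle \subseteq \langle \eta \rangle_b$ straight from the formula of Theorem \ref{gen}, and the reverse inclusion by rewriting $\hat{\eta}(x)$ as a supremum of $\eta$-values over a subset of $G$ so that the sup-property can be applied. The first inclusion is handled correctly, and the rewriting $\hat{\eta}(x) = \bigvee_F \bigl\{ \bigwedge_{i=1}^{n}\eta(y_i) \bigr\}$ over factorizations $F$ of $x$ is also sound, since each such meet $m_F$ satisfies $m_F \leq a_0$ and $x \in \langle \eta_{m_F} \rangle$, while conversely $x \in \langle \eta_a \rangle$ yields a factorization with $m_F \geq a$.

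There is, however, one genuinely false justification at the pivot of your argument. You assert that each inner meet $\bigwedge_{i=1}^{n}\eta(y_i)$, \emph{because it is finite}, ``is attained at some $\eta(y_{j^*})$.'' Here $L$ is only a completely distributive lattice, not a chain, and a finite meet need not equal any of its arguments: if $\eta(y_1)$ and $\eta(y_2)$ are incomparable, then $\eta(y_1) \wedge \eta(y_2)$ lies strictly below both. Since your set $Y_x$ consists precisely of these alleged attaining factors, the identity $\hat{\eta}(x) = \bigvee_{y \in Y_x} \eta(y)$ --- the very expression to which you apply the sup-property --- has no support as written. The gap is repairable from the hypothesis already in play: applying the sup-property to each two-element subset $\{u,v\} \subseteq G$ forces $\eta(u) \vee \eta(v) \in \{\eta(u), \eta(v)\}$, so any two values of $\eta$ are comparable; hence $\mathrm{Im}\,\eta$ is a chain, every finite meet of $\eta$-values is a genuine minimum attained at some factor, and moreover $\eta_b \neq \emptyset$ for $b \leq a_0$ since $a_0$ itself is attained. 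With that one observation inserted, your construction of $Y_x$, the choice of $y^*$ with $\eta(y^*) = \hat{\eta}(x) \geq b$, and the conclusion that every factor of the selected factorization lies in $\eta_b$ all go through, completing the proof of $\langle \eta \rangle_b \subseteq \langle \eta_b \rangle$.
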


\begin{theorem}(\cite{ajmal_sol}, Lemma 3.27)
	\label{gen_hom}
	Let $f : G \rightarrow H$ be a group homomorphism, let $\mu \in L(G)$ and $\nu \in L(H)$. Then, for all $\eta \in L^{\mu}$, $\langle f(\eta) \rangle = f(\langle \eta \rangle)$ and for all $\theta \in L^{\nu}$, $\langle f^{-1}(\theta) \rangle = f^{-1}(\langle \theta \rangle).$
\end{theorem}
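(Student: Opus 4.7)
The plan is to prove each equality via two inclusions, using the universal property of the generator operation $\langle \cdot \rangle$ together with Theorem \ref{hom_gp} (which says images and preimages of $L$-subgroups are again $L$-subgroups) and the adjunction-style facts recorded in Proposition \ref{hom}.

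For the first identity $\langle f(\eta) \rangle = f(\langle \eta \rangle)$, the inclusion $\subseteq$ is almost immediate: since $\langle \eta \rangle \in L(G)$, Theorem \ref{hom_gp} gives $f(\langle \eta \rangle) \in L(H)$, and $\eta \subseteq \langle \eta \rangle$ yields $f(\eta) \subseteq f(\langle \eta \rangle)$. Thus $f(\langle \eta \rangle)$ is an $L$-subgroup of $H$ containing $f(\eta)$, and by minimality $\langle f(\eta) \rangle \subseteq f(\langle \eta \rangle)$. For the reverse inclusion I would pull back: $f^{-1}(\langle f(\eta) \rangle)$ lies in $L(G)$ (again by Theorem \ref{hom_gp}) and contains $\eta$, since $\eta \subseteq f^{-1}(f(\eta)) \subseteq f^{-1}(\langle f(\eta) \rangle)$ by Proposition \ref{hom}(ii). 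Hence $\langle \eta \rangle \subseteq f^{-1}(\langle f(\eta) \rangle)$, and pushing forward with Proposition \ref{hom}(iii) gives $f(\langle \eta \rangle) \subseteq f(f^{-1}(\langle f(\eta) \rangle)) \subseteq \langle f(\eta) \rangle$.

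For the second identity, the inclusion $\langle f^{-1}(\theta) \rangle \subseteq f^{-1}(\langle \theta \rangle)$ is obtained by the same universal-property argument: $f^{-1}(\langle \theta \rangle) \in L(G)$ and contains $f^{-1}(\theta)$, so it must contain the $L$-subgroup it generates. The reverse inclusion is the delicate one. Here I would invoke Theorem \ref{gen} to write
\[
f^{-1}(\langle \theta \rangle)(x) \;=\; \langle \theta \rangle(f(x)) \;=\; \bigvee_{b \leq b_0} \{\, b \mid f(x) \in \langle \theta_b \rangle \,\},
\]
and then use the elementary identity $(f^{-1}(\theta))_b = f^{-1}(\theta_b)$ together with the compatibility between $f^{-1}$ and classical subgroup generation to rewrite the right-hand side in exactly the form supplied by Theorem \ref{gen} for $\langle f^{-1}(\theta) \rangle(x)$.

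The main obstacle is precisely this last step: to show that whenever $f(x) \in \langle \theta_b \rangle$, the element $x$ can be located inside an appropriate classical subgroup $\langle f^{-1}(\theta_a) \rangle$ at a comparable level $a$. This is the one place where care is needed about how $f^{-1}$ interacts with group-theoretic generation; once that compatibility is in hand, the $L$-theoretic identity reduces to a clean level-set matching via Theorem \ref{gen}.
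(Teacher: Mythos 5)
Your first identity and the forward inclusion of the second are correct and complete: the adjunction-style argument via Theorem \ref{hom_gp} and Proposition \ref{hom}(ii)--(iii) is exactly the right mechanism, and those three quarters of the theorem need nothing more. (For calibration: the paper itself states Theorem \ref{gen_hom} without proof --- it is recalled from Lemma 3.27 of \cite{ajmal_sol} --- so your proposal stands or falls on its own, not against an in-paper argument.)

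The step you defer at the end, however, is not a deferrable technicality but a genuine gap, and the route you sketch cannot close it: the crisp compatibility you would need, $f^{-1}(\langle S \rangle) \subseteq \langle f^{-1}(S) \rangle$, is \emph{false} for an arbitrary homomorphism. If $S \cap \mathrm{im}\,f = \emptyset$ and $\ker f \neq \{e\}$, then $f^{-1}(S) = \emptyset$, so $\langle f^{-1}(S) \rangle = \{e\}$, while $f^{-1}(\langle S \rangle) \supseteq f^{-1}(\{e_H\}) = \ker f$. The same phenomenon defeats the $L$-version of your reverse inclusion: take $G = \{e\}$, $H = \mathbb{Z}$, $f$ the inclusion, $\mu = 1_G$, $\nu = 1_H$, and let $\theta \in L^{\nu}$ be the $L$-point with $\theta(1) = a > 0$ and $\theta(n) = 0$ otherwise. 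Then $\langle \theta \rangle$ is the constant map $a$ on $\mathbb{Z}$, so $f^{-1}(\langle \theta \rangle)(e) = a$, whereas $f^{-1}(\theta)$ is identically $0$ and hence $\langle f^{-1}(\theta) \rangle(e) = 0$. So no level-set matching via Theorem \ref{gen}, however careful, can establish $f^{-1}(\langle \theta \rangle) \subseteq \langle f^{-1}(\theta) \rangle$ at the stated level of generality; the preimage half genuinely requires surjectivity of $f$ (as in the setting of \cite{ajmal_sol}). Under surjectivity your plan does go through: every non-empty $S \subseteq H$ has $f^{-1}(S) \neq \emptyset$, whence $\ker f \subseteq \langle f^{-1}(S) \rangle$ (if $x \in f^{-1}(S)$ and $k \in \ker f$, then $k = x^{-1}(xk) \in \langle f^{-1}(S) \rangle$), and writing $f(y) = s_1^{\epsilon_1} \cdots s_n^{\epsilon_n}$ with $s_i \in S$ and choosing preimages $x_i$ gives $y \in \langle f^{-1}(S) \rangle$ modulo a kernel element; one must also treat the levels $b$ with $\theta_b = \emptyset$ (unattained suprema), which is done by observing that $\bigvee \{ b \mid \theta_b \neq \emptyset \}$ already equals the tip of $\theta$. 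You should either add the surjectivity hypothesis and carry out this kernel argument, or record explicitly that the statement as quoted does not hold for arbitrary $f$.
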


\section{Maximal $L$-Subgroups of a Nilpotent $L$-Group}

In this section, we show that if $\mu$ is a nilpotent $L$-group, then the maximal $L$-subgroups of $\mu$ having the same tip and tail as $\mu$ are normal in $\mu$ (Theorem \ref{nil_max}). To prove this result, the concepts of nilpotent $L$-subgroups, normalizer of an $L$-subgroup of an $L$-group and normal closure of an $L$-subgroup in an $L$-group are used. We begin by recalling the notion of cosets and normalizer of an $L$-subgroup from \cite{ajmal_nor}: 

\begin{definition}(\cite{ajmal_nor})
	Let $\eta \in L(\mu)$ and let $a_x$ be an $L$-point of $\mu$. The left (respectively, right) coset of $\eta$ in $\mu$ with respect to $a_x$ is defined as the set product $a_x \circ \eta$ ($\eta \circ a_x$).
\end{definition}

\noindent From the definition of set product of two $L$-subsets, it can be easily seen that for all $z \in G$, 
\[ (a_x \circ \eta)(z) = a \wedge \eta(x^{-1}z) \quad \text{ and } \quad (\eta \circ a_x)(z) = a \wedge \eta(zx^{-1}). \]

\begin{definition}(\cite{ajmal_nor})
	Let $\eta \in L(\mu)$. The normalizer of $\eta$ in $\mu$, denoted by $N(\eta$), is the $L$-subgroup defined as follows:
	\[ N(\eta) = \bigcup \left\{ a_x \in \mu \mid a_x \circ \eta = \eta \circ a_x \right\}. \]
	$N(\eta)$ is the largest $L$-subgroup of $\mu$ such that $\eta$ is a normal $L$-subgroup of $N(\eta)$. Also, it has been established in \cite {ajmal_nor} that  $\eta \in NL(\mu)$ if and only if $N(\mu)=\mu$.
\end{definition}
 
 \noindent Below, we provide an $L$-point characterization for an $L$-subgroup to be contained in the normalizer.

\begin{lemma}
	\label{lpt_norm}
	Let $\eta$, $\theta \in L(\mu)$. If $a_x \circ b_y \circ a_{x^{-1}} \in \eta$ for all $a_x \in \theta$ and $b_y \in \eta$, then $\theta \subseteq N(\eta)$.
\end{lemma}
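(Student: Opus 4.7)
The plan is to pick an arbitrary $x \in G$, set $a = \theta(x)$, and show that the $L$-point $a_x$ lies in $N(\eta)$; since $a_x \in \mu$ (because $\theta \subseteq \mu$), establishing $a_x \circ \eta = \eta \circ a_x$ will force $\theta(x) \le N(\eta)(x)$, and as $x$ was arbitrary we get $\theta \subseteq N(\eta)$.

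The first step is to unwind the hypothesis. A direct computation with the definition of set product shows that $a_x \circ b_y \circ a_{x^{-1}} = (a \wedge b)_{xyx^{-1}}$, so the assumption ``$a_x \circ b_y \circ a_{x^{-1}} \in \eta$ for all $a_x \in \theta$ and $b_y \in \eta$'' is equivalent to
\[ \eta(xyx^{-1}) \ge \theta(x) \wedge \eta(y) \quad \text{for all } x,y \in G, \]
obtained by choosing $a = \theta(x)$ and $b = \eta(y)$.

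Next, fix $x$ and set $a = \theta(x)$. Using the coset formulas recorded after the definition of cosets, for each $z \in G$ we have $(a_x \circ \eta)(z) = a \wedge \eta(x^{-1}z)$ and $(\eta \circ a_x)(z) = a \wedge \eta(zx^{-1})$. Applying the displayed inequality with $y := x^{-1}z$ yields $\eta(zx^{-1}) \ge \theta(x) \wedge \eta(x^{-1}z) \ge a \wedge \eta(x^{-1}z)$, so $(\eta \circ a_x)(z) \ge (a_x \circ \eta)(z)$. Since $\theta \in L(\mu)$ gives $\theta(x^{-1}) = \theta(x) = a$, applying the displayed inequality at $x^{-1}$ in place of $x$ and with $y := zx^{-1}$ produces $\eta(x^{-1}z) \ge \theta(x) \wedge \eta(zx^{-1})$, giving the reverse comparison.

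Therefore $a_x \circ \eta = \eta \circ a_x$, so $a_x$ belongs to the family whose union defines $N(\eta)$, and hence $\theta(x) = a \le N(\eta)(x)$ for every $x \in G$. There is no real obstacle here; the only delicate step is the bookkeeping needed to apply the hypothesis twice (at $x$ and at $x^{-1}$) with the correct substitutions for $y$, which is made possible by the fact that $\theta$, being an $L$-subgroup of $\mu$, is symmetric under inversion.
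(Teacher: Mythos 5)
Your proposal is correct and follows essentially the same route as the paper's proof: first converting the hypothesis into the pointwise inequality $\eta(xyx^{-1}) \geq \eta(y) \wedge \theta(x)$, then using the coset formulas to verify $a_x \circ \eta = \eta \circ a_x$, and finally invoking the union defining $N(\eta)$. The only cosmetic differences are that you spell out the second inequality (which the paper dismisses with ``similarly''), making explicit the use of $\theta(x^{-1}) = \theta(x)$, and that you verify the coset equality only for the single $L$-point $a_x$ with $a = \theta(x)$, which indeed suffices.
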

\begin{proof}
	Suppose that $a_x \circ b_y \circ a_{x^{-1}} \in \eta$ for all $a_x \in \theta$ and $b_y \in \eta$. Firstly, we show that for all $x,~y \in G$
	  \begin{equation}\eta(xyx^{-1}) \geq \eta(y) \wedge \theta(x).\end{equation}
	
	\noindent Let $x,~y \in G$ and let $a=\theta(x)$ and $b=\eta(y)$. Then, $a_x \in \theta$ and $b_y \in \eta$. Hence, by hypothesis,
	\[ (a \wedge b)_{xyx^{-1}} = a_x \circ \ b_y \circ a_{x^{-1}} \in \eta, \]
	which implies that
	\[ \eta(xyx^{-1}) \geq a \wedge b = \eta(y) \wedge \theta (x).	\]
	This proves (1). Next, in order to show that $\theta \subseteq N(\eta)$, we claim that for all $a_x \in \theta$, 
	\begin{equation} a_x \circ \eta =  \eta \circ a_x.\end{equation}
	So, we let $z \in G$ and $a_x \in \theta$. Then,
	\begin{equation*}
	\begin{split}
	(a_x \circ \eta)(z) &= a \wedge \eta(x^{-1}z) \\
	& = a \wedge \eta(x^{-1}(zx^{-1})x) \\
	& \geq a \wedge \theta(x) \wedge \eta(zx^{-1})\qquad	(\text{by~} (1) )\\
	& = a \wedge \eta(zx^{-1}) \qquad\qquad~~~ (\text{since $a_x \in \theta$}) \\
	& = (\eta \circ a_x)(z).
	\end{split}	
	\end{equation*}
	Similarly, it can be shown that $(\eta \circ a_x)(z) \geq (a_x \circ \eta)(z)$. This proves the claim.
	 Here, we write 
	$ {\ A} =\{a_x \in \mu : \eta \circ a_x = a_x \circ \eta\}$. By (2), it follows that 
	\[a_x \in  {\ A} \text {~for all~} a_x \in \theta.\]
	
\noindent Finally, let $x \in G$ 
	 and observe
	\[	\theta(x) = \bigcup_{a_x \in \theta} \{a_x\} \subseteq \bigcup _{ a_x \in {\ A}} \{a_x \} = N(\eta)(x). \]	
This completes the proof.
\end{proof}

\noindent The notion of a nilpotent $L$-subgroup was developed by Ajmal and Jahan \cite{ajmal_nil}. For this, the definition of the commutator of two $L$-subgroups was modified, and this modified definition was used to develop the notion of the descending central chain of an $L$-subgroup. We recall these concepts below.

\begin{definition}(\cite{ajmal_nil})
	Let $\eta$, $\theta \in L^{\mu}$. The commutator of $\eta$ and $\theta$ is the $L$-subset $(\eta, \theta)$ of $\mu$ defined as follows:
	\[ (\eta, \theta)(x) = 
	\begin{cases}
	\vee \{ \eta(y) \wedge \theta(z)\} & \text{if }x=[y,z] \text{ for some } y,z \in G, \\ 
	\text{inf } \eta \wedge \text{inf } \theta & \text{if } x \neq [y,z] \text{ for any } y,z \in G.
	\end{cases} \]
\end{definition}

\noindent The commutator $L$-subgroup of $\eta$, $\theta \in L^{\mu}$, denoted by $[\eta, \theta]$, is defined to be the $L$-subgroup of $\mu$ generated by $(\eta, \theta)$.

\begin{definition}(\cite{ajmal_nil})
	\label{def_centchain}
	Let $\eta \in L(\mu)$. Take $Z_0(\eta) = \eta$ and for each $i \geq 0$, define $Z_{i+1}(\eta) = [Z_i(\eta), \eta]$. Then, the chain
	\[ \eta = Z_0(\eta) \supseteq Z_1(\eta) \supseteq \ldots \supseteq Z_i(\eta) \supseteq \ldots \]
	of $L$-subgroups of $\mu$ is called the descending central chain of $\eta$. 
\end{definition}

\begin{definition}(\cite{ajmal_nil})
	\label{def_nil}
	Let $\eta \in L(\mu)$ with tip $a_0$ and tail $t_0$ and $a_0 \neq t_0$. If the descending central chain
	\[ \eta = Z_0(\eta) \supseteq Z_1(\eta) \supseteq \ldots \supseteq Z_i(\eta) \supseteq \ldots \]
	terminates to the trivial $L$-subgroup $\eta_{t_0}^{a_0}$ in a finite number of steps, then $\eta$ is called a nilpotent $L$-subgroup of $\mu$. Moreover, $\eta$ is said to be nilpotent of class $c$ if $c$ is the smallest non-negative integer such that $Z_c(\eta) = \eta_{t_0}^{a_0}$. 
\end{definition}

\noindent Here, we define the successive normalizers of $\eta$ as follows:
\[	\eta_0 = \eta \qquad \text{ and } \qquad \eta_{i+1} = N(\eta_i) \qquad \text{ for all } i \geq 0.	\]
Then, by the definition of normalizer (see Definition 3.2),  $\eta_{i+1}$ is the the largest $L$-subgroup of $\mu$ containing $\eta_i$ such that  $\eta_i \lhd \eta_{i+1}$. Consequently,
\begin{equation}
	\label{chain1}
	\eta = \eta_0 \subseteq \eta_1 \subseteq \ldots \subseteq \eta_i \subseteq \eta_{i+1} \subseteq \ldots 
\end{equation}
is an ascending chain of $L$-subgroups of $\mu$ starting from $\eta$ such that each $\eta_i$ is a normal $L$-subgroup of $\eta_{i+1}$. We call (3) the ascending chain of normalizers of  $\eta$ in $\mu$.
\begin{lemma}
	\label{subnormal}
	Let $\mu \in L(G)$  be a nilpotent $L$-group and $\eta$ be an $L$-subgroup of $\mu$ having the same tip and tail as $\mu$. Then, the ascending chain of normalizers of  $\eta$ in $\mu$ is finite and terminates at $\mu$.
\end{lemma}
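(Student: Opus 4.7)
The plan is to compare, index by index, the ascending chain of normalizers of $\eta$ in $\mu$ with the descending central chain of $\mu$ guaranteed by nilpotency. Write $c$ for the nilpotency class of $\mu$ (Definition \ref{def_nil}), so that
\[ \mu = Z_0(\mu) \supseteq Z_1(\mu) \supseteq \cdots \supseteq Z_c(\mu) = \mu_{t_0}^{a_0}, \]
where $a_0$ and $t_0$ are the common tip and tail of $\mu$ and $\eta$. The key reduction is to prove, by induction on $i$, the containment
\[ Z_{c-i}(\mu) \subseteq \eta_i \qquad \text{for } 0 \leq i \leq c. \]
Once this is established, setting $i = c$ forces $\mu = Z_0(\mu) \subseteq \eta_c \subseteq \mu$, so $\eta_c = \mu$ and the chain (3) terminates at $\mu$ in at most $c$ steps.

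For the base case $i=0$, I would observe that since $\eta$ has the same tip and tail as $\mu$, the trivial $L$-subgroup $\mu_{t_0}^{a_0}$ coincides with $\eta_{t_0}^{a_0}$, and this is automatically contained in $\eta_0 = \eta$. For the inductive step, assume $Z_{c-i}(\mu) \subseteq \eta_i$. To deduce $Z_{c-i-1}(\mu) \subseteq \eta_{i+1} = N(\eta_i)$, I plan to invoke Lemma \ref{lpt_norm}: it suffices to check that for every $L$-point $a_x \in Z_{c-i-1}(\mu)$ and every $b_y \in \eta_i$, the $L$-point $a_x \circ b_y \circ a_{x^{-1}} = (a \wedge b)_{xyx^{-1}}$ lies in $\eta_i$, i.e.\ that $\eta_i(xyx^{-1}) \geq a \wedge b$. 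Writing $xyx^{-1} = (xyx^{-1}y^{-1})\cdot y$, where $xyx^{-1}y^{-1}$ is a commutator of $x^{-1}$ and $y^{-1}$, the $L$-subgroup axiom for $\eta_i$ gives
\[ \eta_i(xyx^{-1}) \geq \eta_i(xyx^{-1}y^{-1}) \wedge \eta_i(y). \]
From the definition of the commutator of two $L$-subsets, $(Z_{c-i-1}(\mu),\eta_i)(xyx^{-1}y^{-1}) \geq Z_{c-i-1}(\mu)(x) \wedge \eta_i(y) \geq a \wedge b$, and this value is inherited by the generated $L$-subgroup $[Z_{c-i-1}(\mu), \eta_i]$. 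Because $\eta_i \subseteq \mu$, monotonicity of the commutator yields
\[ [Z_{c-i-1}(\mu),\eta_i] \subseteq [Z_{c-i-1}(\mu),\mu] = Z_{c-i}(\mu) \subseteq \eta_i, \]
the last inclusion being the induction hypothesis. Together with $\eta_i(y) \geq b$, this gives $\eta_i(xyx^{-1}) \geq a \wedge b$, completing the induction.

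The main technical obstacle I anticipate is cleanly justifying the monotonicity statement $[Z_{c-i-1}(\mu),\eta_i] \subseteq [Z_{c-i-1}(\mu),\mu]$ from the definitions of the commutator $L$-subset and of the $L$-subgroup it generates, since the ambient lattice carries no cancellation; it reduces to the pointwise inequality $(Z_{c-i-1}(\mu),\eta_i) \subseteq (Z_{c-i-1}(\mu),\mu)$ followed by taking generated $L$-subgroups. Beyond that, the argument is a direct $L$-theoretic rendering of the classical fact that in a nilpotent group every proper subgroup is properly contained in its normalizer, so that iterated normalizers exhaust the ambient group in finitely many steps.
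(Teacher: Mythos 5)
Your proposal is correct and follows essentially the same route as the paper: the same induction claim $Z_{c-i}(\mu) \subseteq \eta_i$, the same base case via the shared tip and tail, the same decomposition $xyx^{-1} = (xyx^{-1}y^{-1})y$, and the same appeal to Lemma \ref{lpt_norm}. The only cosmetic difference is that you pass through $[Z_{c-i-1}(\mu), \eta_i]$ and then use monotonicity of the commutator, whereas the paper notes $b_y \in \eta_i \subseteq \mu$ and places the $L$-point $(a \wedge b)_{xyx^{-1}y^{-1}}$ directly in $[Z_{c-i-1}(\mu), \mu] = Z_{c-i}(\mu)$, which renders your anticipated monotonicity obstacle unnecessary (though your reduction of it to the pointwise inequality followed by taking generated $L$-subgroups is valid).
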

\begin{proof}
	Let $\mu$ be a nilpotent $L$-group and let $\eta$ be an $L$-subgroup of $\mu$ having the same tip and tail as $\mu$. Let $a_0$ and $t_0$ denote the tip and tail of $\mu$, respectively. Let
	\[	\mu = Z_{0}(\mu) \supseteq Z_{1}(\mu) \supseteq \ldots \supseteq Z_{c} (\mu) = \mu^{a_0}_{t_0}	\]
	be the descending central chain of $\mu$, where
	\[	Z_i(\mu) = [Z_{i-1}(\mu), \mu] \qquad \text{ for all } i \geq 1.	\]
	Define the successive normalizers of $\eta$ as follows:
	\[	\eta_0 = \eta \qquad \text{ and } \qquad \eta_{i+1} = N(\eta_i) \qquad \text{ for all } i \geq 0.	\]
	Then,
	\begin{equation}
	\label{chain1}
	\eta = \eta_0 \subseteq \eta_1 \subseteq \ldots \subseteq \eta_i \subseteq \eta_{i+1} \subseteq \ldots 
	\end{equation}
	is the ascending chain of normalizers of  $\eta$ in $\mu$. We claim that \[Z_{c-i} (\mu)\subseteq \eta_i \text{ ~~for all}~~ i \geq 0.\] 
	
	\noindent We apply induction on $i$. When $i=0$,
	\[	Z_c (\mu) = \mu^{a_0}_{t_0} = \eta^{a_0}_{t_0} \subseteq \eta = \eta_0.	\]
	Hence, the claim is true for $i=0$. Now, suppose that the claim holds for some $i \geq 0$, that is, $Z_{c-i} (\mu) \subseteq \eta_i$. We shall show that 
	\[ Z_{c-i-1} (\mu) \subseteq \eta_{i+1}. \]
	
	\noindent So, let $a_x \in Z_{c-i-1} (\mu)$. Then, for all $b_y \in \eta_i$, 
	\begin{equation*}
	\begin{split}
	a_x \circ b_y \circ a_{x^{-1}} &= (a \wedge b)_{xyx^{-1}} \\
	& = (a \wedge b)_{[x,y]y} \\
	& = (a \wedge b)_{[x,y]} \circ b_y \\
	& \in [Z_{c-i-1} (\mu), \mu] \circ \eta_i \qquad (\text{since } a_x \in Z_{c-i-1} (\mu), b_y \in \mu)\\
	& = Z_{c-i} (\mu) \circ \eta_i , ~~~~~~~~~~~~(\text{by definition,} ~ [Z_{c-i-1} (\mu), \mu] = Z_{c-i}(\mu))\\
	& \subseteq \eta_i. ~~~~~~~~~~~~~~~~~~~~~~~~~~ (\text{by the induction hypothesis}, Z_{c-i} \subseteq \eta_i )
	\end{split}	
	\end{equation*}
	Therefore, by Lemma \ref{lpt_norm},
	\[ Z_{c-i-1} (\mu) \subseteq N(\eta_i) = \eta_{i+1}.	\]
	Hence, the claim holds.
	In particular, when $i=c$, $\mu=Z_0 \subseteq \eta_c$. Therefore, $\eta_c = \mu$. This proves the result.	
\end{proof}

\noindent We now recall the definition of the normal closure of an $L$-subgroup in an $L$-group from \cite{ajmal_nc}:

\begin{definition}(\cite{ajmal_nc})
	Let $\eta \in L(\mu)$. The $L$-subset $\mu\eta\mu^{-1}$ of $\mu$ defined by 
	\[ \mu\eta\mu^{-1}(x) = \bigvee_{x=zyz^{-1}} \left\{ \eta(y) \wedge \mu(z) \right\} \qquad \text{ for each } x \in G \]
	is called the conjugate of $\eta$ in $\mu$.
\end{definition}

\begin{definition}(\cite{ajmal_nc})
	Let $\eta \in L(\mu)$. The normal closure of $\eta$ in $\mu$, denoted by $\eta^\mu$, is defined to be the $L$-subgroup of $\mu$ generated by the conjugate $\mu\eta\mu^{-1}$, that is,
	\[ \eta^\mu = \langle \mu\eta\mu^{-1} \rangle. \]
	Moreover, $\eta^\mu$ is the smallest normal $L$-subgroup of $\mu$ containing $\eta$.
\end{definition}

\begin{definition}(\cite{ajmal_nc})
	Let $\eta \in L(\mu)$. Define a descending series of $L$-subgroups of $\mu$ inductively as follows:
	\[ \eta^{(0)} = \mu \qquad \text{ and } \qquad \eta^{(i)} = \eta^{\eta^{(i-1)}} \qquad \text{ for all } i \geq 1. \]
	Then, $\eta^{(i)}$ is the smallest normal $L$-subgroup of $\eta^{(i-1)}$ containing $\eta$. We call $\eta^{(i)}$ the $i^{th}$ normal closure of $\eta$ in $\mu$. The series of $L$-subgroups
	\[ \mu = \eta^{(0)} \supseteq \eta^{(1)} \supseteq \ldots \supseteq \eta^{(i-1)} \supseteq \eta^{(i)} \supseteq \dots \supseteq \eta \]
	is called the normal closure series of $\eta$ in $\mu$.
\end{definition}

\begin{theorem} (\cite{ajmal_nc}) \label{nor_nc} Let $\eta \in L(\mu)$. Then, $\eta \in NL(\mu)$ if and only if $\eta ^{\mu} = \eta$.\end{theorem}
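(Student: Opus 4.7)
The plan is to deduce this theorem almost directly from the characterization of $\eta^\mu$ as the smallest normal $L$-subgroup of $\mu$ containing $\eta$, a property that is stated immediately after the definition of the normal closure. The argument splits into the two implications, both of which turn on comparing $\eta$ with the conjugate $\mu\eta\mu^{-1}$ and with its generated $L$-subgroup.

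For the forward implication, I would suppose $\eta \in NL(\mu)$. Then $\eta$ is itself a normal $L$-subgroup of $\mu$ that contains $\eta$, so by the minimality of the normal closure we immediately get $\eta^\mu \subseteq \eta$. The reverse inclusion $\eta \subseteq \eta^\mu$ holds unconditionally and is the only nontrivial bookkeeping in the proof: for any $x \in G$, one uses the trivial decomposition $x = e\cdot x \cdot e^{-1}$ in the formula
\[
\mu\eta\mu^{-1}(x) = \bigvee_{x=zyz^{-1}} \bigl\{\eta(y) \wedge \mu(z)\bigr\}
\]
to obtain $\mu\eta\mu^{-1}(x) \geq \eta(x) \wedge \mu(e)$. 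Since $\mu(e)$ is the tip of $\mu$ and $\eta \subseteq \mu$ forces $\eta(x) \leq \mu(e)$, this simplifies to $\mu\eta\mu^{-1}(x) \geq \eta(x)$. Hence $\eta \subseteq \mu\eta\mu^{-1} \subseteq \langle \mu\eta\mu^{-1}\rangle = \eta^\mu$, and combining gives $\eta = \eta^\mu$.

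For the converse, I would simply observe that if $\eta^\mu = \eta$, then $\eta$ coincides with the normal closure of itself, which by definition is a normal $L$-subgroup of $\mu$. Hence $\eta \in NL(\mu)$.

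There is no genuine obstacle here; the only thing to be careful about is justifying $\eta \subseteq \eta^\mu$, which is the reason I isolate it as its own step. Everything else is an application of the minimality clause built into the definition of $\eta^\mu$, so the whole proof should fit in a few lines.
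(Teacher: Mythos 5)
Your proof is correct. Note, however, that the paper itself gives no proof of this statement---it is imported verbatim from \cite{ajmal_nc}---so there is no in-paper argument to compare against; what you have written is the standard derivation from the property the paper states right after Definition 3.11, namely that $\eta^\mu$ is the smallest normal $L$-subgroup of $\mu$ containing $\eta$ (the converse direction and the inclusion $\eta^\mu \subseteq \eta$ under normality of $\eta$ both follow immediately from that clause). One small observation: the containment $\eta \subseteq \eta^\mu$ that you verify by hand via the decomposition $x = e\,x\,e^{-1}$ is already asserted as part of that same clause (``containing $\eta$''), so your computation is strictly redundant relative to the quoted property; it is nonetheless a worthwhile inclusion, since it makes the argument self-contained given only the bare definition $\eta^\mu = \langle \mu\eta\mu^{-1} \rangle$, and your justification---$\mu\eta\mu^{-1}(x) \geq \eta(x) \wedge \mu(e) = \eta(x)$, using $\eta(x) \leq \mu(x) \leq \mu(e)$ because the tip of an $L$-subgroup is attained at the identity---is exactly right.
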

\begin{proposition}
	\label{int_nor}
	Let $\eta \in NL(\mu)$ and $\theta \in L(\mu)$. Then, $\eta \cap \theta \in NL(\theta)$.
\end{proposition}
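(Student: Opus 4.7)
The plan is to verify the two requirements of being a normal $L$-subgroup of $\theta$ directly from the definitions. First, I would check that $\eta \cap \theta \in L(\theta)$. Since $\eta, \theta \in L(\mu) \subseteq L(G)$, the intersection $\eta \cap \theta$ is an $L$-subgroup of $G$ (the arbitrary intersection property recalled after Theorem \ref{hom_gp}), and clearly $\eta \cap \theta \subseteq \theta$, so $\eta \cap \theta \in L(\theta)$.

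Next, for the normality condition, I need to show $(\eta \cap \theta)(yxy^{-1}) \geq (\eta \cap \theta)(x) \wedge \theta(y)$ for all $x, y \in G$. I would estimate the two components of the intersection separately. For the $\eta$-component, using $\eta \triangleleft \mu$ together with the fact that $\theta \subseteq \mu$ (hence $\theta(y) \leq \mu(y)$), I get
\[ \eta(yxy^{-1}) \geq \eta(x) \wedge \mu(y) \geq \eta(x) \wedge \theta(y). \]
For the $\theta$-component, since $\theta$ is an $L$-subgroup of $G$, the standard estimate gives
\[ \theta(yxy^{-1}) \geq \theta(y) \wedge \theta(x) \wedge \theta(y^{-1}) = \theta(x) \wedge \theta(y). \]

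Taking the meet of these two inequalities yields
\[ (\eta \cap \theta)(yxy^{-1}) \geq \bigl(\eta(x) \wedge \theta(y)\bigr) \wedge \bigl(\theta(x) \wedge \theta(y)\bigr) = (\eta \cap \theta)(x) \wedge \theta(y), \]
which is exactly the normality condition for $\eta \cap \theta$ in $\theta$. There is no real obstacle here: the proof is a straightforward unpacking of definitions, with the only substantive ingredient being the replacement $\mu(y) \geq \theta(y)$ that lets the hypothesis $\eta \triangleleft \mu$ descend to a statement about $\theta$.
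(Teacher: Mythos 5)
Your proof is correct and complete. Note that the paper itself states Proposition \ref{int_nor} without any proof, so there is nothing to compare against line by line; your direct pointwise verification is exactly the kind of argument the omission presupposes. Both halves check out: the intersection of $L$-subgroups of $G$ is an $L$-subgroup contained in $\theta$, and the normality estimate combines $\eta(yxy^{-1}) \geq \eta(x) \wedge \mu(y) \geq \eta(x) \wedge \theta(y)$ (the only place $\eta \triangleleft \mu$ and $\theta \subseteq \mu$ are used) with the standard subgroup estimate $\theta(yxy^{-1}) \geq \theta(x) \wedge \theta(y)$, after which idempotence of $\wedge$ gives $(\eta \cap \theta)(yxy^{-1}) \geq (\eta \cap \theta)(x) \wedge \theta(y)$. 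For comparison, within this paper's toolkit there is a second natural route that transfers the classical fact directly: by Theorems \ref{lev_sgp} and \ref{lev_norsgp}, it suffices to check that each non-empty level subset $(\eta \cap \theta)_a = \eta_a \cap \theta_a$ is a normal subgroup of $\theta_a$, which is the classical statement that $N \cap H \trianglelefteq H$ whenever $N \trianglelefteq G$ and $H \leq G$. The level-subset route buys economy by outsourcing the lattice computation to group theory, while your argument is self-contained and makes visible where each hypothesis enters; either is acceptable here.
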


\begin{lemma}
	\label{chain_nc}
	Let $\mu \in L(G)$ and $\eta \in L(\mu)$. Then, there exists an ascending chain of $L$-subgroups 
	\[ \eta = \theta_0 \subseteq \theta_1 \subseteq \ldots \subseteq \theta_n \subseteq \ldots \subseteq  \mu \]
	terminating at $\mu $ in a  finite number of steps such that each $\theta_i$ is normal in $\theta_{i+1}$ if and only if the normal closure series of $\eta$ in $\mu$ terminates at $\eta$ in a  finite number of steps.
\end{lemma}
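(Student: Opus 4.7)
The statement is an equivalence of two ``finiteness'' conditions, and the two directions are essentially dual to each other; the natural strategy is to use the normal closure series itself as the chain for one direction, and to prove a termwise containment by induction for the other.

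For the easy direction, I would assume the normal closure series terminates, i.e.\ $\eta^{(n)} = \eta$ for some $n$, and simply reverse the indexing. Setting $\theta_i = \eta^{(n-i)}$ for $i = 0, 1, \ldots, n$ gives $\theta_0 = \eta^{(n)} = \eta$ and $\theta_n = \eta^{(0)} = \mu$; moreover, by the definition of the normal closure series, $\theta_i = \eta^{\eta^{(n-i-1)}} = \eta^{\theta_{i+1}}$ is the smallest normal $L$-subgroup of $\theta_{i+1}$ containing $\eta$, and in particular $\theta_i \triangleleft \theta_{i+1}$. Thus the required ascending chain exists.

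For the converse, suppose $\eta = \theta_0 \subseteq \theta_1 \subseteq \cdots \subseteq \theta_n = \mu$ is an ascending chain with $\theta_i \triangleleft \theta_{i+1}$ for every $i$. I would prove by induction on $i$ that
\[
\eta^{(i)} \subseteq \theta_{n-i} \qquad \text{for } i = 0, 1, \ldots, n.
\]
The base case is $\eta^{(0)} = \mu = \theta_n$. For the inductive step, assume $\eta^{(i)} \subseteq \theta_{n-i}$. Since $\theta_{n-i-1} \triangleleft \theta_{n-i}$ and $\eta^{(i)} \subseteq \theta_{n-i}$, Proposition \ref{int_nor} (applied with $\theta_{n-i}$ playing the role of the parent $L$-group) yields that $\theta_{n-i-1} \cap \eta^{(i)}$ is a normal $L$-subgroup of $\eta^{(i)}$. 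It clearly contains $\eta$, because $\eta = \theta_0 \subseteq \theta_{n-i-1}$ and $\eta \subseteq \eta^{(i)}$. Since $\eta^{(i+1)}$ is the \emph{smallest} normal $L$-subgroup of $\eta^{(i)}$ containing $\eta$, we conclude
\[
\eta^{(i+1)} \subseteq \theta_{n-i-1} \cap \eta^{(i)} \subseteq \theta_{n-i-1}.
\]
Setting $i = n$ gives $\eta^{(n)} \subseteq \theta_0 = \eta$, and the reverse inclusion $\eta \subseteq \eta^{(n)}$ is immediate from the construction of the normal closure series. Hence $\eta^{(n)} = \eta$ and the series terminates.

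The main delicate point is the inductive step of the second direction, where one must produce a candidate normal $L$-subgroup of $\eta^{(i)}$ sandwiched between $\eta$ and $\theta_{n-i-1}$; the entire argument hinges on the ability, supplied by Proposition \ref{int_nor}, to restrict normality under intersection with a sub-$L$-group. Once this manoeuvre is available, both directions are short and purely formal.
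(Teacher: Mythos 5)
Your proposal is correct and matches the paper's own proof essentially verbatim: the same reverse-indexed induction establishing $\eta^{(i)} \subseteq \theta_{n-i}$ via Proposition \ref{int_nor} applied to $\theta_{n-i-1} \cap \eta^{(i)}$ inside $\eta^{(i)}$, and the same observation that the terminating normal closure series itself furnishes the required chain. The only difference is cosmetic --- you present the two directions in the opposite order from the paper.
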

\begin{proof}
	Suppose 
	\[ \eta = \theta_0 \subseteq \theta_1 \subseteq \ldots \subseteq \theta_n = \mu \]
is a finite ascending chain of $L$-subgroups such that each $\theta_i$ is normal in $\theta_{i+1}$. Let $\eta^{(i)}$ denote the $i^{th}$ normal closure of $\eta$ in $\mu$. We claim that \[\eta^{(i)} \subseteq \theta_{n-i}~~\text{ for all}~~ i \geq 0.\]
	
	\noindent We apply induction on $i$. When $i=0$, \[ \eta^{(0)} = \mu = \theta_n. \]
	Hence, the claim is true for $i=0$. Now, suppose that the claim holds for some $i \geq 0$, that is, $\eta^{(i)} \subseteq \theta_{n-i}$. We shall show that
	\[\eta^{(i +1)} \subseteq \theta_{n-i-1}. \]
	Here, note that by the hypothesis, $\theta_{n-i-1}$ is normal in $\theta_{n-i}$. Also, by the induction hypothesis, $\eta^{(i)} \subseteq \theta_{n-1}$. Therefore, by Proposition \ref{int_nor}, we have  
	\[\theta_{n-i-1} \cap \eta^{(i)} \lhd \eta^{(i)}.\] 
	Moreover, again by the hypothesis, $\eta \subseteq \theta_{n-i-1}$. Also, by the definition of normal closure (see Definition 3.11), $\eta \subseteq \eta^{(i)}$. This implies,  
	\[ \eta \subseteq \theta_{n-i-1} \cap \eta^{(i)} \subseteq \eta^{(i)}.\] 
	Again, by the definition of normal closure, $\eta^{(i+1)}$ is the smallest normal $L$-subgroup of $\eta^{(i)}$ containing $\eta$. Therefore,
	\[ \eta^{(i+1)} \subseteq \theta_{n-i-1} \cap \eta^{(i)} \subseteq \theta_{n-i-1}. \]
	This proves the result by induction. In particular, when $i=n$, 
	\[ \eta \subseteq \eta^{(n)} \subseteq \theta_0 = \eta. \]
	Thus, $\eta^{(n)} = \eta$ and the normal closure series of $\eta$ in $\mu$ terminates  at $\eta$ in $n$ steps.
	
	\noindent Conversely, if $\eta^{(n)} = \eta$ for some non-negative integer $n$, then the series of $L$-subgroups
	\[ \eta = \eta^{(n)} \subseteq \eta^{(n-1)} \subseteq \ldots \subseteq \eta^{(0)} = \mu \]
	is a finite ascending chain such that each $\eta^{(i)}$ is normal in $\eta^{(i-1)}$. 
\end{proof}

\noindent Now, we recall the definition of a maximal $L$-subgroup of an $L$-group from \cite{jahan_max}:

\begin{definition}
	Let $\mu \in L(G)$. A proper $L$-subgroup $\eta$ of $\mu$ is said to be a maximal $L$-subgroup of $\mu$ if, whenever $\eta \subseteq \theta \subseteq \mu$ for some $\theta \in L(\mu)$, then either $\theta = \eta$ or $\theta = \mu$.
\end{definition}

\begin{theorem}
	\label{nil_max}
	Let $\mu \in L(G)$ be a nilpotent $L$-group. Then, every maximal $L$-subgroup of $\mu$ having the same tip and tail as $\mu$ is normal in $\mu$.
\end{theorem}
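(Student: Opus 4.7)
The strategy is to use the ascending chain of normalizers machinery from Lemma \ref{subnormal} and exploit the maximality of $\eta$ to force the chain to jump from $\eta$ directly to $\mu$ in its first step. Write $\eta$ for a maximal $L$-subgroup of $\mu$ with the same tip $a_0$ and tail $t_0$ as $\mu$, and consider the ascending chain of normalizers
\[\eta = \eta_0 \subseteq \eta_1 \subseteq \ldots \subseteq \eta_i \subseteq \eta_{i+1} \subseteq \ldots \]
where $\eta_{i+1}=N(\eta_i)$. Since $\eta$ and $\mu$ share tip and tail, Lemma \ref{subnormal} applies: there exists some integer $c$ such that $\eta_c=\mu$.

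Next I would look at the very first step of this chain. By construction $\eta_1 = N(\eta)$ is an $L$-subgroup of $\mu$, and by the definition of normalizer (together with the fact that $\eta\lhd N(\eta)$) we have $\eta\subseteq \eta_1\subseteq \mu$. Because $\eta$ is a maximal $L$-subgroup of $\mu$, it follows that either $\eta_1=\eta$ or $\eta_1=\mu$.

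I would then rule out the first alternative. If $\eta_1=\eta$, a trivial induction on $i$ yields $\eta_i=\eta$ for every $i\geq 0$, since $\eta_{i+1}=N(\eta_i)=N(\eta)=\eta$. In particular $\eta_c=\eta$, but Lemma \ref{subnormal} tells us $\eta_c=\mu$; this forces $\eta=\mu$, contradicting that $\eta$ is a proper $L$-subgroup of $\mu$. Hence $N(\eta)=\mu$, and by the defining property of the normalizer (namely, $N(\eta)$ is the largest $L$-subgroup of $\mu$ in which $\eta$ is normal), we conclude $\eta\lhd\mu$.

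The only non-routine step is invoking Lemma \ref{subnormal} correctly, which is why the hypothesis that $\eta$ and $\mu$ share the same tip and tail is essential (it allows the base case $Z_c(\mu)=\mu_{t_0}^{a_0}=\eta_{t_0}^{a_0}\subseteq\eta$ of that lemma's induction to go through). After this is in hand, the proof is just a two-line application of the maximality assumption, so there is no serious combinatorial obstacle; the real content of the theorem is packaged in Lemma \ref{subnormal}.
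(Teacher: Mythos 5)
Your proof is correct, but it takes a genuinely different (and shorter) route than the paper after the shared first step. Both arguments invoke Lemma \ref{subnormal} to get that the chain of successive normalizers $\eta=\eta_0\subseteq\eta_1\subseteq\cdots$ terminates at $\mu$. The paper, however, does not touch the normalizer chain again: it feeds Lemma \ref{subnormal} into Lemma \ref{chain_nc} to conclude that the normal closure series $\mu=\eta^{(0)}\supseteq\eta^{(1)}\supseteq\cdots$ terminates at $\eta$, applies maximality to the inclusion $\eta\subseteq\eta^{(1)}\subsetneq\mu$ (after ruling out $\eta^{(1)}=\eta^{(0)}$, which would freeze the series at $\mu$), obtains $\eta=\eta^{(1)}=\eta^{\mu}$, and concludes normality via Theorem \ref{nor_nc}. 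You instead apply maximality directly to $N(\eta)\in L(\mu)$: the dichotomy $N(\eta)=\eta$ or $N(\eta)=\mu$, with the first branch killed because a stationary normalizer chain could never reach $\mu$ as Lemma \ref{subnormal} demands, forces $N(\eta)=\mu$ and hence $\eta\lhd\mu$ by the defining property of the normalizer. Your version is the direct analogue of the classical normalizer-condition argument for nilpotent groups and is logically leaner, bypassing Lemma \ref{chain_nc}, the normal closure series, and Theorem \ref{nor_nc} entirely. What the paper's detour buys is expository rather than logical: the authors state in the introduction that this section is meant to demonstrate the compatibility of nilpotency, normalizers, \emph{and} normal closures in the $L$-setting, so they deliberately route the proof through the normal closure machinery. (Minor point in your favor: the paper's Definition 3.2 contains the typo ``$\eta\in NL(\mu)$ if and only if $N(\mu)=\mu$''; your argument uses the correct form $N(\eta)=\mu$.)
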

\begin{proof}
	Let $\eta$ be a maximal $L$-subgroup of $\mu$ having the same tip and tail as $\mu$. Since $\mu$ is nilpotent,
	by Lemma \ref{subnormal}, the ascending chain of normalizers of $\eta$ in $\mu$ is finite and terminates  at $\mu$ after finitely many steps.   Therefore, by Lemma \ref{chain_nc}, the normal closure series of $\eta$ in $\mu$ is also finite and terminates at $\eta$ in finitely many steps, say $m$. Let the finite normal closure series be 
	\begin{equation}	\eta = \eta^{(m)} \subseteq \eta^{(m-1)} \subseteq \ldots \subseteq \eta^{(0)} = \mu,	\end{equation}
	where $\eta^{(i)}=\eta^{\eta^{(i-1)}}$.
	\noindent Here, we note that $\eta^{(1)} \neq \eta^{(0)}$, for if $\eta^{(1)} = \eta^{(0)}$, then 
	\[ \eta^{(2)} = \eta^{\eta^{(1)}} = \eta^{\eta^{(0)}} = \eta^{(1)} = \eta^{(0)}, \]
	which implies that $\eta=\eta^{(m)} = \eta^{(0)}=\mu$, which contradicts the fact that $\eta \subsetneq \mu$. Therefore, we have
	\begin{equation}\eta \subseteq \eta^{(1)} \subsetneq \eta^{(0)} = \mu.\end{equation}
	Since $\eta$ is a maximal $L$-subgroup of $\mu$,  we must have $ \eta = \eta^{(1)} =\eta^{\mu}$. By Theorem \ref{nor_nc}, it follows that $\eta$ is a normal $L$-subgroup of $\mu$.
\end{proof}

\noindent Below, we provide an example of a nilpotent $L$-group and its maximal normal $L$-subgroup $\eta$ :

\begin{example}
	Let $M=\{ l,f,a,b,c,d,u \}$ be the lattice given by figure 1. Let $\mathbf{2}$ be the chain $0<1$. Then, 
	\begin{align*}
	M \times \mathbf{2} = &\{ (l,0), (f,0), (a,0), (b,0), (c,0), (d,0), (u,0),\\
	&~ (l,1), (f,1), (a,1), (b,1), (c,1), (d,1), (u,1) \}. 
	\end{align*} 
	\noindent Let $G=S_4$, the group of all permutations of the set $\{1,2,3,4\}$ with identity element $e$. Let
	\[ D_4^1 = \langle (24), (1234) \rangle, D_4^2 = \langle (12), (1324) \rangle, D_4^3 = \langle (23), (1342) \rangle \]
	denote the dihedral subgroups of $G$ and $V_4 = \{e, (12)(34), (13)(24), (14)(23)\}$ denote the Klein-4 subgroup of $G$. 
	\begin{center}
		\includegraphics[scale=1.0]{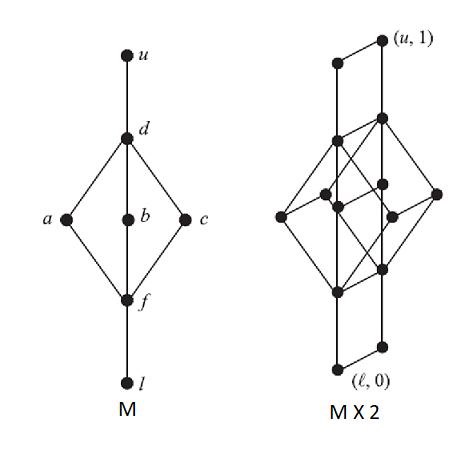}\\
		\centering \text{Figure 1}		
	\end{center}
	Define the $L$-subset $\mu$ of $G$ as follows:
	\[ \mu(x) = \begin{cases}
	(u,1) &\text{if } x=e,\\
	(d,1) &\text{if } x \in V_4 \setminus \{e\},\\
	(a,1) &\text{if } x \in D_4^1 \setminus V_4,\\
	(b,1) &\text{if } x \in D_4^2 \setminus V_4,\\
	(c,1) &\text{if } x \in D_4^3 \setminus V_4,\\
	(f,0) &\text{if } x \in S_4 \setminus \mathop{\cup}\limits_{i=1}^3 D_4^i.
	\end{cases} \]
	Since each non-empty level subset $\mu_t$ is a subgroup of $G$, by Theorem \ref{lev_gp}, $\mu$ is an $L$-subgroup of $G$. We show that $\mu$ is a nilpotent $L$-group. Since $G' = A_4$, it is easy to verify that the commutator $(\mu,\mu)$ is given by
	\[ (\mu,\mu)(x) = \begin{cases}
	(u,1) &\text{if } x=e,\\
	(a,1)\wedge(d,1)=(a,1) &\text{if } x=(13)(24),\\
	(b,1)\wedge(d,1)=(b,1) &\text{if } x=(12)(34),\\
	(c,1)\wedge(d,1)=(c,1) &\text{if } x=(14)(23),\\
	(f,0) &\text{if } x \in S_4 \setminus V_4.
	\end{cases} \]
	Since every non-empty level subset $(\mu,\mu)_t$ is a subgroup of $\mu_t$, by Theorem \ref{lev_sgp}, $(\mu,\mu)$ is an $L$-subgroup of $\mu$ and hence, $Z_1(\mu)=[\mu,\mu]=(\mu,\mu)$. Next, it can be easily verified that $Z_2(\mu)=[\mu,Z_1(\mu)]$ is given by
	\[ Z_2(\mu)= \begin{cases}
	(u,1) &\text{if } x=e,\\
	(f,0) &\text{if } x\in S_4\setminus\{e\}.
	\end{cases} \]
	Hence, $Z_2(\mu)=\mu_{t_0}^{a_0}$, where $a_0 = (u,0)$ and $t_0=(f,0)$ are the tip and tail of $\mu$, respectively. We conclude that $\mu$ is a nilpotent $L$-group. Now, define $\eta \in L^{\mu}$ as follows:
	\[ \eta(x) = \begin{cases}
	(u,1) &\text{if } x=e,\\
	(d,1) &\text{if } x \in V_4 \setminus \{e\},\\
	(a,1) &\text{if } x \in D_4^1 \setminus V_4,\\
	(b,0) &\text{if } x \in D_4^2 \setminus V_4,\\
	(c,1) &\text{if } x \in D_4^3 \setminus V_4,\\
	(f,0) &\text{if } x \in S_4 \setminus \mathop{\cup}\limits_{i=1}^3 D_4^i.
	\end{cases} \]
	Since every non-empty level subset $\eta_t$ is a subgroup of $\mu_t$, by Theorem \ref{lev_sgp}, $\eta \in L(\mu)$. We show that $\eta$ is a maximal $L$-subgroup of $\mu$.
	
	\noindent Suppose there exists $\theta \in L(\mu)$ such that $\eta \subsetneq \theta \subseteq \mu$. Since $\eta(x)=\mu(x)$ for all $x \in S_4 \setminus (D_4^2 \setminus V_4)$, $\theta(x) = \eta(x)$ for all $x \in S_4 \setminus (D_4^2 \setminus V_4)$. Hence, there exists $x_0 \in D_4^2 \setminus V_4$ such that 
	\[ (b,1) = \mu(x_0) \geq \theta(x_0) > \eta(x_0) = (b,0), \]
	that is, $\theta(x_0) = (b,1)$. But then $\theta(x) = (b,1) = \mu(x)$ for all $x \in D_4^2 \setminus V_4$. This implies $\theta = \mu$. We conclude that $\eta$ is a maximal $L$-subgroup of $\mu$. Finally, note that $\eta_t = \mu_t$ for all $t \in L \setminus \{(b,1)\}$ and $\eta_{(b,1)} = V_4$ is a normal subgroup of $\mu_{(b,1)} = D_4^2$. Hence, $\eta_t$ is a normal subgroup of $\mu_t$ for each $t \in L$. By Theorem \ref{lev_norsgp}, $\eta$ is a normal $L$-subgroup of $\mu$.	
\end{example}

\section{Finitely Generated $L$-subgroups}

In this section, we introduce the concepts of maximal condition on $L$-subgroups and finitely generated $L$-groups. These notions are closely related in classical group theory. A similar relation between these concepts has been provided in Theorem \ref{max_fin}. 

\begin{definition}
	Let $\mu \in L(G)$. Then, $\mu$ is said to satisfy the maximal condition  if every non-empty set $S$ of $L$-subgroups of $\mu$ contains a maximal element.
\end{definition}

\begin{theorem}
	\label{mcon_chain}
	An $L$-subgroup $\mu$ satisfies the maximal condition  if and only if each proper ascending chain of $L$-subgroups of $\mu$ is finite.
\end{theorem}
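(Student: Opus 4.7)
The plan is to prove both directions directly from the definitions, mirroring the classical proof of the equivalence of the ascending chain condition with the maximal condition in an arbitrary partially ordered set (here, $L(\mu)$ ordered by $\subseteq$).

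For the forward implication, I assume $\mu$ satisfies the maximal condition and consider a proper ascending chain
\[ \eta_1 \subsetneq \eta_2 \subsetneq \ldots \subsetneq \eta_n \subsetneq \ldots \]
of $L$-subgroups of $\mu$. I would form the non-empty set $S = \{\eta_i : i \geq 1\} \subseteq L(\mu)$ and invoke the maximal condition to extract a maximal element $\eta_k \in S$. Since the chain is ascending, $\eta_k \subseteq \eta_{k+1}$, and maximality in $S$ forces $\eta_k = \eta_{k+1}$, contradicting the properness of the chain at that step unless the chain has already terminated by index $k$. Hence every proper ascending chain must be finite.

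For the converse, I would argue by contrapositive. Suppose $\mu$ does not satisfy the maximal condition, so there exists a non-empty collection $S \subseteq L(\mu)$ with no maximal element. Then for every $\eta \in S$, the set $\{\theta \in S : \eta \subsetneq \theta\}$ is non-empty. Choose any $\eta_1 \in S$, and recursively choose $\eta_{n+1} \in S$ with $\eta_n \subsetneq \eta_{n+1}$; this produces an infinite proper ascending chain
\[ \eta_1 \subsetneq \eta_2 \subsetneq \ldots \subsetneq \eta_n \subsetneq \ldots \]
in $L(\mu)$, contradicting the hypothesis that every proper ascending chain is finite.

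The proof is essentially book-keeping, and no deeper machinery from the paper is needed; the arguments of the previous section play no role here. The only subtlety worth flagging is the invocation of dependent choice in the converse to recursively select the $\eta_{n+1}$, which is standard and should not require justification in this setting. I would keep the writeup short, making sure to explicitly note that "proper ascending chain" means each containment is strict, so that the contradiction in the forward direction is transparent.
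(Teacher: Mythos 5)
Your proposal is correct and follows essentially the same route as the paper's proof: the forward direction extracts a maximal element from the set of chain members exactly as the paper does, and your contrapositive argument for the converse is just a rephrasing of the paper's direct recursive construction of an ascending chain (including the implicit use of dependent choice, which the paper leaves unremarked). No gaps.
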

\begin{proof}
	Suppose that $\mu$ satisfies the maximal condition . Let
	\begin{equation}
	\label{chain2}
	\eta_1 \subsetneq \eta_2 \subsetneq \ldots \subsetneq \eta_n \subsetneq \ldots
	\end{equation} 
	be a proper ascending chain of $L$-subgroups in $\mu$. Let $S = \{ \eta_i \mid i \in \mathbb{N} \}$. Then, by hypothesis, $S$ contains a maximal element $\theta$, that is, there exists $\theta \in S$ such that $\theta$ is not contained in any other $\eta_i$. Since (\ref{chain2}) is a proper chain, it must terminate at $\theta$. 
	
	\noindent Conversely, suppose that each proper ascending chain of $L$-subgroups in $\mu$ is finite. Let $S$ be a non-empty set of $L$-subgroups of $\mu$. We show that $S$ contains an $L$-subgroup $\theta$ which is not contained in any other element of $S$. Since $S$ is non-empty, there exists $\eta_0$ in $S$. If $\eta_0$ is not contained in any other element of $S$, set $\theta = \eta_0$. Otherwise, there exists $\eta_1 \in S$ such that $\eta_0 \subsetneq \eta_1$. Again, if $\eta_1$ is not contained in any other element of $S$, set $\theta = \eta_1$. Otherwise, there exists $\eta_2 \in S$ such that $\eta_1 \subsetneq \eta_2$. Continuing in this manner, we get an ascending chain
	\begin{equation}
	\label{chain3}
		\eta_0 \subsetneq \eta_1 \subsetneq \eta_2 \subsetneq \ldots
	\end{equation}
	By hypothesis, (\ref{chain3}) is finite. Hence, there exists $\theta \in S$ which is not contained in any other element of $S$. Hence, the result.
\end{proof}

\begin{theorem}
	\label{mcon_subgp}
	Let $\mu \in L(G)$ satisfy the maximal condition and let $\eta \in L(\mu)$. Then, $\eta$ also satisfies the maximal condition.
\end{theorem}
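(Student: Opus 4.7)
The plan is to reduce the statement to the ascending chain characterization provided by Theorem \ref{mcon_chain}, which says that $\mu$ satisfies the maximal condition if and only if every proper ascending chain of $L$-subgroups of $\mu$ is finite. Thus, I would show instead that every proper ascending chain in $L(\eta)$ is finite.

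The key observation, which I would state first, is the transitivity of the $L$-subgroup relation: any $\theta \in L(\eta)$ is in particular an $L$-subgroup of $G$ with $\theta \subseteq \eta \subseteq \mu$, and hence $\theta \in L(\mu)$. This gives the inclusion $L(\eta) \subseteq L(\mu)$, which is really the only substantive fact needed. I expect this to be a one-line observation from the definitions recalled in the preliminaries (the paragraph defining $L^{\mu}$ and $L(\mu)$).

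With this in hand, the proof proceeds directly. Take an arbitrary proper ascending chain
\[
\theta_1 \subsetneq \theta_2 \subsetneq \ldots \subsetneq \theta_n \subsetneq \ldots
\]
in $L(\eta)$. By the observation, this is also a proper ascending chain in $L(\mu)$. Since $\mu$ satisfies the maximal condition, Theorem \ref{mcon_chain} forces this chain to be finite. Applying Theorem \ref{mcon_chain} once more, now in the reverse direction and to $\eta$, yields that $\eta$ satisfies the maximal condition.

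There is no real obstacle here; the only point worth being careful about is to invoke Theorem \ref{mcon_chain} in both directions and to justify $L(\eta) \subseteq L(\mu)$ explicitly, since the definitions of $L(\mu)$ and $L(\eta)$ are relative to the parent $L$-group and one should not silently conflate the two.
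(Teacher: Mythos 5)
Your argument is correct: the inclusion $L(\eta) \subseteq L(\mu)$ holds for exactly the reason you give ($\theta \in L(\eta)$ means $\theta \in L(G)$ with $\theta \subseteq \eta \subseteq \mu$), and with it your two applications of Theorem \ref{mcon_chain} go through without trouble. The paper, however, states this theorem without any proof, evidently because the intended argument is immediate from the definition alone, and it is worth seeing that your own key observation already delivers it in one step: if $S$ is any non-empty set of $L$-subgroups of $\eta$, then by $L(\eta) \subseteq L(\mu)$ the set $S$ is also a non-empty set of $L$-subgroups of $\mu$, so the maximal condition on $\mu$ directly yields a maximal element of $S$, and being maximal in $S$ is a property of the partially ordered set $S$ itself, independent of which parent $L$-group one views it inside. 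Your detour through the chain characterization is thus logically sound but unnecessary; it trades a one-line definitional argument for two invocations of Theorem \ref{mcon_chain}, and in particular the converse direction of that theorem rests on an inductive construction of a chain (a dependent-choice-style argument), which the direct proof avoids entirely. The one genuinely load-bearing fact in either route is the transitivity observation $L(\eta) \subseteq L(\mu)$, which you correctly isolated and justified.
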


\noindent In \cite{jaballah_Mingen}, Jaballah and Mordeson have introduced the notion of finitely generated fuzzy ideals of a ring through fuzzy singletons:

\begin{definition}(\cite{jaballah_Mingen})
	A fuzzy ideal $\mu$ of a ring R is said to be finitely generated if there exists a finite set of fuzzy singletons $\phi$ such that $\mu = \langle \phi \rangle$.
\end{definition}

\noindent Below, we define finitely genearted $L$-subgroup of a group in the spirit of the above definition.  

\begin{definition}
	An $L$-subgroup $\mu$ of $G$ is said to be finitely generated if there exist finitely many $L$-points ${a_1}_{x_1}, \ldots , {a_n}_{x_n}$ in $\mu$ such that $\mu = \langle {a_1}_{x_1}, \ldots , {a_n}_{x_n} \rangle$.
\end{definition}

\noindent Recall the definition of a minimal generating set for a fuzzy set as given in \cite{mordeson_comm}.
\begin{definition}(\cite{mordeson_comm})
	Let $\mu \in F(G)$. Let $S$ denote a set of fuzzy singletons such that if $x_a, x_b \in S$, then $a=b>0$ and $x \neq 0$. Define the fuzzy subset $\chi(S)$ of $G$ as follows: For all $x \in G$
	\[ \chi(S)(x) = \begin{cases}
		a \qquad \text{ if } x_a \in S,\\
		0 \qquad \text{ otherwise}.
	\end{cases} \]
	Set $\langle S \rangle = \langle \chi(S) \rangle$.
\end{definition}

\noindent Note that the following condition 
\[ \text{if } x_a, x_b \in S, \text{ then } a=b>0, \]
becomes redundant  in Definition 4.5 as the concept of finitely generated $L$-subgroup involves the union of $L$-points. Hence, Definition 4.5 is a simple version of the above definition.  

\noindent Here, we prove the following:

\begin{theorem}
	Let $H$ be a subgroup of $G$. Then, $H$ is finitely generated if and only if the characteristic function $1_H$ is a finitely generated $L$-subgroup of $G$.
\end{theorem}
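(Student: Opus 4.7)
The plan is to prove both directions by appealing to the level-subset characterization of the generated $L$-subgroup given by Theorem \ref{gen_sup}. The key observation is that a finite union $\eta = \bigcup_{i=1}^n {a_i}_{x_i}$ of $L$-points has finite image and therefore possesses the sup-property, so Theorem \ref{gen_sup} yields $\langle \eta \rangle_b = \langle \eta_b \rangle$ for every $b$ not exceeding the tip of $\eta$.

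For the forward implication, I would assume $H = \langle x_1, \ldots, x_n \rangle$ and set $\eta = 1_{x_1} \cup \cdots \cup 1_{x_n}$. Since each $x_i \in H$, one has $\eta \subseteq 1_H$, and because $1_H$ is itself an $L$-subgroup of $G$ this forces $\langle \eta \rangle \subseteq 1_H$. Applying Theorem \ref{gen_sup} at $b=1$ (the tip of $\eta$) yields
\[
\langle \eta \rangle_1 = \langle \eta_1 \rangle = \langle x_1, \ldots, x_n \rangle = H,
\]
so $\langle \eta \rangle(x) = 1$ for every $x \in H$. Combined with $\langle \eta \rangle(x) \leq 1_H(x) = 0$ for $x \notin H$, this forces $\langle \eta \rangle = 1_H$, exhibiting $1_H$ as the $L$-subgroup of $G$ finitely generated by the $L$-points $1_{x_1}, \ldots, 1_{x_n}$.

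For the converse, I would suppose $1_H = \langle {a_1}_{x_1}, \ldots, {a_n}_{x_n} \rangle$. Zero-valued $L$-points contribute nothing to the union, so I may discard them and assume each $a_i > 0$; then ${a_i}_{x_i} \in 1_H$ gives $1_H(x_i) \geq a_i > 0$, whence $x_i \in H$. Setting $\eta = \bigcup_{i=1}^n {a_i}_{x_i}$ and $a^\ast = \min_i a_i > 0$, and noting that $a^\ast$ does not exceed the tip $\bigvee_i a_i$ of $\eta$, Theorem \ref{gen_sup} gives $\langle \eta \rangle_{a^\ast} = \langle \eta_{a^\ast} \rangle$. Because $\eta(x_i) = a_i \geq a^\ast$ while $\eta$ vanishes outside $\{x_1, \ldots, x_n\}$, the level set $\eta_{a^\ast}$ equals $\{x_1, \ldots, x_n\}$, so $\langle \eta_{a^\ast} \rangle = \langle x_1, \ldots, x_n \rangle$. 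On the other hand, $\langle \eta \rangle_{a^\ast} = (1_H)_{a^\ast} = H$, since $a^\ast > 0$ and $1_H$ only takes values $0$ and $1$. Equating these gives $H = \langle x_1, \ldots, x_n \rangle$, so $H$ is finitely generated.

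The only subtlety I anticipate is the bookkeeping of level thresholds in the converse: one must choose $a^\ast$ as the minimum so that every $x_i$ lies in $\eta_{a^\ast}$, and one must first discard zero-valued $L$-points so that $a^\ast$ is genuinely positive and $(1_H)_{a^\ast}$ coincides with $H$. Once these choices are in place, the argument is essentially a direct translation between ordinary generation in $G$ and $L$-generation via Theorem \ref{gen_sup}.
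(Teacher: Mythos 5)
Your forward direction is correct, and it is a legitimate variant of the paper's argument: the paper proves $1_H \subseteq \langle 1_{x_1}, \ldots, 1_{x_n} \rangle$ directly by factoring each $1_z$, $z \in H$, as a set product $1_{y_1} \circ \cdots \circ 1_{y_k}$ of point generators, while you get the same conclusion from the level identity $\langle \eta \rangle_1 = \langle \eta_1 \rangle$ of Theorem \ref{gen_sup}. In that direction your appeal to the sup-property is sound, because $\eta = \bigcup_{i=1}^n 1_{x_i}$ has image contained in the chain $\{0,1\}$.

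The converse, however, has a genuine gap: your ``key observation'' that a finite union of $L$-points has the sup-property is false in the paper's setting, where $L$ is an arbitrary completely distributive lattice rather than a chain. The sup-property requires the supremum $\bigvee_{a \in A}\eta(a)$ to be \emph{attained} at some element of $A$, and a finite set of incomparable lattice values need not contain its join. Indeed, the paper's own Example 2, immediately following this theorem, exhibits exactly this situation: $\eta = b_{r^2} \cup c_s$ with $b, c$ incomparable and $b \vee c = 1$ not in the image of $\eta$, so Theorem \ref{gen_sup} simply does not apply to such an $\eta$. The same non-chain issue breaks your threshold bookkeeping twice more: $a^\ast = \min_i a_i$ need not exist (only $\bigwedge_i a_i$ does), and even when each $a_i > 0$ the infimum can be $0$ --- take the diamond lattice with $a \wedge b = 0$ --- in which case $\eta_{a^\ast} = G$ and $(1_H)_{a^\ast} = G$, and the argument yields nothing. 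The repair is the normalization step the paper performs first: since $1_H(x_i) \geq a_i > 0$ and $1_H$ takes only the values $0$ and $1$, we get $1_H(x_i) = 1$, hence $1_H = \langle {a_1}_{x_1}, \ldots, {a_n}_{x_n} \rangle \subseteq \langle 1_{x_1}, \ldots, 1_{x_n} \rangle \subseteq 1_H$. After replacing the given $L$-points by these tip-one points, the generating $L$-subset is $\{0,1\}$-valued, the sup-property holds, and your level argument at $a^\ast = 1$ goes through verbatim; the paper instead finishes with Theorem \ref{gen}, deriving the contradiction $1 = 1_H(z) = 0$ for a hypothetical $z \in H \setminus \langle x_1, \ldots, x_n \rangle$, which avoids invoking the sup-property altogether.
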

\begin{proof}
	Let $H$ be a finitely generated subgroup of $G$. Then, there exist finitely many elements $x_1, x_2, \ldots, x_n$ in $H$ such that 
	\[ H = \langle x_1, x_2, \ldots, x_n \rangle. \]
	We claim that 
	\[ 1_H = \langle 1_{x_1}, 1_{x_2}, \ldots 1_{x_n} \rangle. \]
	Since $1_{x_1}, 1_{x_2}, \ldots, 1_{x_n} \in 1_H$,
	\[ \langle 1_{x_1}, 1_{x_2}, \ldots, 1_{x_n} \rangle \subseteq 1_H. \]
	To show the reverse inclusion, let $1_z \in 1_H$. Then, $z \in H = \langle x_1, x_2, \ldots, x_n \rangle.$ This implies
	\[ z = y_1 y_2 \ldots y_k, \text{ where } y_i \text{ or } {y_i}^{-1} \in \{x_1, x_2, \ldots, x_n\}. \]
	Thus,
	\begin{equation*}
		\begin{split}
			1_z &= 1_{y_1 y_2 \ldots y_k} \\
			&= 1_{y_1}\circ 1_{y_2} \circ \ldots 1_{y_k}\\
			&\subseteq \langle 1_{y_1}, 1_{y_2} \ldots, 1_{y_k} \rangle \\
			&\subseteq \langle 1_{x_1}, 1_{x_2}, \ldots, 1_{x_n} \rangle.
		\end{split}
	\end{equation*}
	Hence, $1_H = \langle 1_{x_1}, 1_{x_2}, \ldots 1_{x_n}\rangle$.
	
	\noindent Conversely, suppose that $1_H$ is finitely generated. Then, by definition, there exist finitely many $L$-points ${a_1}_{x_1}, {a_2}_{x_2}, \ldots {a_n}_{x_n}$ of $1_H$ such that
	\[ 1_H = \langle {a_1}_{x_1}, {a_2}_{x_2}, \ldots {a_n}_{x_n} \rangle. \] 
	Clearly, we can assume that each $a_i>0$. This implies
	\[ 1_H(x_i) \geq a_i > 0 \text{ for all } i=1,2,\ldots n, \]
	that is, 
	\[ 1_H(x_i) = 1. \]
	Thus, $1_{x_1}, 1_{x_2}, \ldots, 1_{x_n} \in 1_H$ and therefore,
	\[ 1_H = \langle {a_1}_{x_1}, {a_2}_{x_2}, \ldots {a_n}_{x_n} \rangle \subseteq \langle 1_{x_1}, 1_{x_2}, \ldots, 1_{x_n} \rangle \subseteq 1_H.  \]
	Thus, $1_H = \langle 1_{x_1}, 1_{x_2}, \ldots 1_{x_n} \rangle$. We claim that $H = \langle x_1, x_2, \ldots, x_n \rangle$.
	
	\noindent Suppose, if possible, that $H \neq \langle x_1, x_2, \ldots, x_n \rangle.$ Then, there exists $z \in H$ such that $z \notin \langle x_1, x_2, \ldots x_n \rangle$. Note that since $1_H = \langle \bigcup_{i=1}^{n} 1_{x_i} \rangle$, by Theorem \ref{gen}, 
	\begin{equation*}
		\begin{split}
			1 = 1_H(z) &= \mathop{\vee}\limits_{a \leq 1}{\left\{a \mid z\in\left\langle \left( \mathop{\cup}\limits_{i=1}^{n} 1_{x_i} \right)_{a}\right \rangle \right\}}\\ 
			&= \mathop{\vee}\limits_{a \leq 1}{\left\{a \mid z\in\left\langle \mathop{\cup}\limits_{i=1}^{n} (1_{x_i})_a \right \rangle \right\}}.
		\end{split}
	\end{equation*}
	Now, since $z \notin \langle x_1, x_2, \ldots x_n\rangle$, $z\notin\left\langle \mathop{\cup}\limits_{i=1}^{n} (1_{x_i})_a \right \rangle$ for any value of $a>0$. It follows that $1=1_H(z)=0$, which is absurd. Hence, we conclude that
	\[ H= \langle x_1, x_2, \ldots, x_n \rangle. \]	
\end{proof}

\noindent The following example illustrates Definition 4.5:

\begin{example}
	Let $G = D_8$, the dihedral group of order 8, given by
	\[ D_8 = \langle r, s \mid r^4 = s^2 = e, rs = sr^{-1} \rangle. \]
	Let $H_1 = \{e, r^2\}$ be the center of $G$, $H_2 = \{e, s\}$ and $K = \{ e, r^2, s, sr^2\}$.
	Let $L$ be the lattice represented by the following figure.
	\begin{center}
		\includegraphics[scale=0.7]{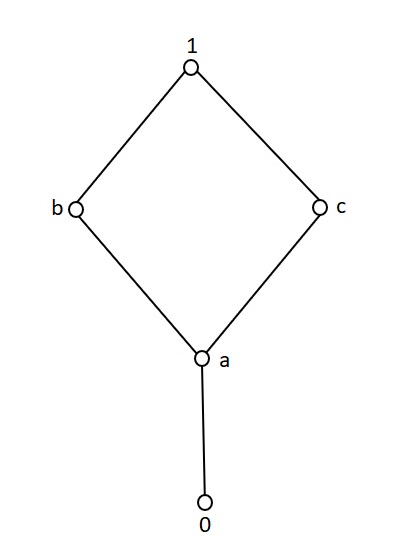}
	\end{center}
	Define $\mu : G \rightarrow L$ as follows:
	\[ \mu(x) = \begin{cases}
		1 & \text{if } x=e,\\
		b & \text{if } x \in H_1 \setminus \{e\},\\
		c & \text{if } x \in H_2 \setminus \{e\},\\
		a & \text{if } x \in K \setminus H_1 \cup H_2,\\
		0 & \text{if } x \in D_8 \setminus K.
		\end{cases} \]
	Since each non-empty level subset $\mu_t$ is a subgroup of $G$, by Theorem \ref{lev_gp},  $\mu \in L(G)$. We show that $\mu = \langle b_{r^2}, c_s \rangle$.
	\noindent Let $\eta = b_{r^2} \cup c_s$. Then, $\eta \in L^{\mu}$ and hence, $\langle \eta \rangle \subseteq \mu$. Now, tip($\eta$) = $\mathop{\vee}\limits_{x \in G} \eta(x) = b \vee c = 1$. Hence, by Theorem \ref{gen}, for all $x \in G$,
	\[ \langle \eta \rangle (x) = \mathop{\vee}\limits_{t \leq 1} \{t \mid x \in \langle \eta_t \rangle \}. \]
	Note that 
	\[ \eta_0 = D_8, \eta_a = \{r^2, s\}, \eta_b = \{r^2\}, \eta_c = \{s\}, \eta_1 = \emptyset. \]
	Hence,
	\[ \langle \eta_0 \rangle = D_8, \langle \eta_a \rangle = K, \langle \eta_b \rangle = H_1 , \langle \eta_c \rangle = H_2, \langle \eta_1 \rangle = \{e\}. \]
	Thus, $\langle \eta \rangle = \langle b_{r^2}, c_s \rangle = \mu.$ 
\end{example}

\noindent Recall that a lattice $L$ is said to be upper well ordered if  every non-empty subset of $L$ contains its supremum. Clearly, if $L$ is upper well ordered, then each $L$-subset $\eta$ satisfies the sup-property. Also, every upper well ordered lattice is a chain.

\begin{lemma}
	\label{fgen_chain}
	Let $L$ be an upper well ordered lattice and let $\mu$ be a finitely generated $L$-group. Then, every proper ascending chain of $L$-subgroups of $\mu$ whose set union equals $\mu$ is finite.
\end{lemma}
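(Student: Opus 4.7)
The plan is to use the hypothesis that $\mu$ is finitely generated to pin each generator inside some member of the ascending chain, and then exploit the fact that an upper well ordered lattice has all suprema attained.

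First, since $\mu$ is finitely generated, by Definition 4.5 we may write $\mu = \langle {a_1}_{x_1}, \ldots, {a_n}_{x_n} \rangle$ for finitely many $L$-points ${a_k}_{x_k} \in \mu$. Now suppose, toward a contradiction, that we are given a proper ascending chain
\[
\eta_1 \subsetneq \eta_2 \subsetneq \ldots \subsetneq \eta_i \subsetneq \ldots
\]
of $L$-subgroups of $\mu$ such that $\bigcup_{i} \eta_i = \mu$, and that this chain is infinite.

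Next, I would locate each generator inside the chain. For each $k \in \{1, \ldots, n\}$ we have
\[
a_k \leq \mu(x_k) = \Bigl( \bigcup_i \eta_i \Bigr)(x_k) = \bigvee_{i} \eta_i(x_k).
\]
Because $L$ is upper well ordered, the non-empty subset $\{\eta_i(x_k) \mid i \in \mathbb{N}\}$ of $L$ contains its supremum; hence there exists an index $i_k$ with $\eta_{i_k}(x_k) = \mu(x_k) \geq a_k$, that is, ${a_k}_{x_k} \in \eta_{i_k}$. Setting $N = \max\{i_1, \ldots, i_n\}$, the chain is ascending, so ${a_k}_{x_k} \in \eta_N$ for every $k$.

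Finally, since $\eta_N \in L(\mu)$ contains the generating set $\{{a_1}_{x_1}, \ldots, {a_n}_{x_n}\}$ and since $\langle {a_1}_{x_1}, \ldots, {a_n}_{x_n} \rangle$ is by definition the smallest $L$-subgroup containing them, we obtain $\mu = \langle {a_1}_{x_1}, \ldots, {a_n}_{x_n} \rangle \subseteq \eta_N$. Combined with $\eta_N \subseteq \mu$ this yields $\eta_N = \mu$, and then $\eta_{N+1} \subseteq \mu = \eta_N$ contradicts the strict inclusion $\eta_N \subsetneq \eta_{N+1}$. Thus the chain must be finite.

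The only real obstacle is the passage from ``$\bigvee_i \eta_i(x_k) \geq a_k$'' to ``some single $\eta_{i_k}$ already satisfies $\eta_{i_k}(x_k) \geq a_k$''; this is exactly the place where the upper well ordering of $L$ is used (in a general completely distributive lattice the supremum need not be attained, and in that case a finitely generated $L$-group could in principle arise as the union of a strictly ascending chain, so the hypothesis on $L$ is essential).
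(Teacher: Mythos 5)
Your proof is correct and follows essentially the same route as the paper's: both use the upper well ordering of $L$ to conclude that the supremum $\bigvee_i \eta_i(x_k)$ is attained at some member of the chain, place each generator ${a_k}_{x_k}$ in a single $\eta_{i_k}$, and then take the maximum index $N$ to get $\mu = \langle {a_1}_{x_1}, \ldots, {a_n}_{x_n} \rangle \subseteq \eta_N$. The only cosmetic difference is that you frame this as a contradiction with an assumed infinite chain, whereas the paper concludes directly that the chain terminates at some $k \leq m$.
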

\begin{proof}
	Suppose that $\mu = \langle {a_1}_{x_1}, \ldots , {a_n}_{x_n} \rangle.$ Let
	\[ \eta_1 \subsetneq \eta_2 \subsetneq \ldots \subsetneq \eta_l \subsetneq \ldots \]
	be a proper ascending chain of $L$-subgroups of $\mu$ such that $\cup \eta_i = \mu$. Then,
	\[ {a_j}_{x_j} \in \cup \eta_i \text{ for all } j = 1, \ldots, n. \]
	This implies 
	\[ \vee \eta_i(x_j) \geq a_j \text{ for all } j = 1, \ldots, n. \]
	Since $L$ is upper well ordered, there exists $m_j$ such that $\eta_{m_j}(x_j) \geq a_j$, that is, ${a_j}_{x_j} \in \eta_{m_j}$. Let $m$ be the maximum of $m_1, \ldots, m_n.$ Then, ${a_j}_{x_j} \in \eta_m$ for all $j = 1, \ldots, n$. Thus, $\mu = \langle {a_1}_{x_1}, \ldots, {a_n}_{x_n} \rangle \subseteq \eta_m.$ Hence, the chain terminates at some $k \leq m.$
\end{proof}

\begin{lemma}  (\cite{jahan_max})
	\label{union_subgp}
	Let $L$ be a chain and let $\mu \in L(G)$. Let $C$ be a chain of $L$-subgroups of $\mu$. Then,
	\[ \bigcup_{\eta \in C} \eta \in L(\mu). \]	
\end{lemma}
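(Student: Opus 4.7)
My plan is to set $\theta = \bigcup_{\eta \in C}\eta$, so that $\theta(x) = \bigvee_{\eta \in C}\eta(x)$ for each $x \in G$, and then verify the two defining properties of an $L$-subgroup of $\mu$. Since each $\eta \in C$ satisfies $\eta \subseteq \mu$, we immediately get $\theta \subseteq \mu$, so $\theta \in L^{\mu}$. To show $\theta \in L(\mu)$, I would invoke Theorem \ref{lev_sgp}(ii) and reduce the problem to checking that every non-empty strong level subset $\theta_a^{>}$ is a subgroup of $\mu_a^{>}$.

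The key intermediate claim is the identity
\[
\theta_a^{>} \;=\; \bigcup_{\eta \in C}\eta_a^{>}
\]
for each $a \in L$. The inclusion ``$\supseteq$'' is immediate from $\eta \subseteq \theta$. For the reverse inclusion, suppose $x \in \theta_a^{>}$, i.e.\ $\bigvee_{\eta \in C}\eta(x) > a$. If we had $\eta(x) \leq a$ for every $\eta \in C$, the supremum would satisfy $\bigvee_{\eta}\eta(x) \leq a$, a contradiction; hence some $\eta \in C$ has $\eta(x) \not\leq a$, and because $L$ is a chain this forces $\eta(x) > a$, i.e.\ $x \in \eta_a^{>}$. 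This is the one place the hypothesis that $L$ is a chain is actually used, and it is the step I expect to be the subtlest, because in a general lattice one cannot extract a single $\eta$ witnessing the strict inequality.

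With the claim established, I would finish as follows. For each $\eta \in C$, Theorem \ref{lev_sgp}(ii) applied to $\eta \in L(\mu)$ gives that $\eta_a^{>}$ is a subgroup of $\mu_a^{>}$ whenever it is non-empty. Since $C$ is a chain under $\subseteq$, the family $\{\eta_a^{>} : \eta \in C\}$ is a chain of subgroups of $\mu_a^{>}$, and it is a standard fact in (crisp) group theory that the union of a chain of subgroups is again a subgroup. Hence $\theta_a^{>} = \bigcup_{\eta \in C}\eta_a^{>}$ is a subgroup of $\mu_a^{>}$. Applying Theorem \ref{lev_sgp}(ii) once more, this time in the opposite direction, yields $\theta \in L(\mu)$, which completes the proof.
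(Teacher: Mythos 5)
Your proof is correct, and the key subtlety is handled exactly where it should be: the identity $\theta_a^{>}=\bigcup_{\eta\in C}\eta_a^{>}$ genuinely needs $L$ to be a chain (in a lattice with incomparable elements $u,v$ and $a=u$, one can have $\bigvee_\eta \eta(x)>a$ with no single $\eta(x)>a$), and your extraction of a witness via $\eta(x)\not\leq a \Rightarrow \eta(x)>a$ is the right use of that hypothesis. Note, though, that this paper states the lemma with a citation to \cite{jahan_max} and gives no proof at all, so there is no in-paper argument to compare against; your proposal supplies a proof where the paper imports one. Your route — reduce to strong level subsets via Theorem \ref{lev_sgp}(ii), observe that $\{\eta_a^{>} : \eta\in C\}$ is a chain of subgroups of $\mu_a^{>}$ (discarding the empty ones), and invoke the classical fact that a union of a chain of subgroups is a subgroup — is a clean transfer-principle argument, at the cost of leaning on Theorem \ref{lev_sgp}(ii), which itself requires the chain hypothesis. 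The natural alternative is a direct verification of the Rosenfeld conditions: writing $\theta(x)\wedge\theta(y)=\bigvee_{\eta,\eta'\in C}\bigl(\eta(x)\wedge\eta'(y)\bigr)$ by complete distributivity, then using that $C$ is a chain to replace $\eta,\eta'$ by the larger of the two, giving $\eta(x)\wedge\eta'(y)\leq\nu(x)\wedge\nu(y)\leq\nu(xy^{-1})\leq\theta(xy^{-1})$ with $\nu=\max(\eta,\eta')$. That version is self-contained at the level of $L$-subsets but needs the infinite distributive law explicitly; yours externalizes all lattice-theoretic work into the level-set characterization and finishes in crisp group theory. Both are valid; your argument is complete as written.
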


\begin{theorem}
	\label{max_fin}
	Let $\mu \in L(G)$. If $\mu$ satisfies the maximal condition, then every $L$-subgroup of $\mu$ is finitely generated. The converse holds if $L$ is an upper well ordered lattice.
\end{theorem}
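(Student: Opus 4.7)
The argument splits cleanly into the two implications, both handled by maximal-principle style reasoning.

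For the forward direction, assume $\mu$ satisfies the maximal condition and fix $\eta\in L(\mu)$. The plan is to apply the maximal condition to the family
\[ F=\{\theta\in L(\mu)\mid \theta\subseteq\eta \text{ and } \theta \text{ is finitely generated}\}. \]
This $F$ is nonempty: if $a_0$ is the tip of $\eta$, then $(a_0)_e\in\eta$ and the $L$-subgroup $\langle (a_0)_e\rangle$ lies in $F$. By the maximal condition, $F$ contains a maximal element $\theta^{\ast}$, and it suffices to show $\theta^{\ast}=\eta$.

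The key step is an enlargement argument. Suppose $\theta^{\ast}\subsetneq\eta$; then there exists $x\in G$ with $\theta^{\ast}(x)<\eta(x)$, so taking $a=\eta(x)$ we have $a_x\in\eta\setminus\theta^{\ast}$. Writing $\theta^{\ast}=\langle{a_1}_{x_1},\dots,{a_n}_{x_n}\rangle$, the $L$-subgroup
\[ \theta^{\ast\ast}=\langle{a_1}_{x_1},\dots,{a_n}_{x_n},a_x\rangle \]
is generated by $n+1$ $L$-points of $\eta$, so $\theta^{\ast\ast}\in F$, and $\theta^{\ast\ast}$ strictly contains $\theta^{\ast}$ because $a_x\in\theta^{\ast\ast}\setminus\theta^{\ast}$. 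This contradicts the maximality of $\theta^{\ast}$, forcing $\theta^{\ast}=\eta$.

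For the converse, assume $L$ is upper well ordered and every $L$-subgroup of $\mu$ is finitely generated, and suppose for contradiction that $\mu$ fails the maximal condition. By Theorem \ref{mcon_chain} there is a proper infinite ascending chain $\eta_1\subsetneq\eta_2\subsetneq\dots$ of $L$-subgroups of $\mu$. Since an upper well ordered lattice is a chain, Lemma \ref{union_subgp} yields $\eta:=\bigcup_{i\geq 1}\eta_i\in L(\mu)$. By hypothesis $\eta$ is finitely generated, and the $\eta_i$ form a proper ascending chain of $L$-subgroups of $\eta$ whose union equals $\eta$; Lemma \ref{fgen_chain} then forces this chain to terminate, the desired contradiction.

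Neither direction is deep. The only delicate point is recognizing that adjoining a single $L$-point to a finite generating set still yields a finite generating set, which is precisely what powers the enlargement step in the forward direction; once the correct family $F$ has been isolated, the rest is automatic. The converse is essentially a bookkeeping assembly of the chain reformulation of the maximal condition, the union-of-chain lemma, and Lemma \ref{fgen_chain}, and requires no further input beyond the hypothesis that $L$ is upper well ordered.
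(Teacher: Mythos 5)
Your proof is correct, but the forward direction takes a genuinely different route from the paper's. The paper proves that direction by construction: starting from an $L$-point ${a_1}_{x_1}\in\eta$, it repeatedly adjoins an $L$-point of $\eta$ lying outside the subgroup generated so far, producing a proper ascending chain $\eta_1\subsetneq\eta_2\subsetneq\ldots$ of finitely generated $L$-subgroups of $\eta$; it then invokes Theorem \ref{mcon_subgp} to transfer the maximal condition from $\mu$ to $\eta$ and Theorem \ref{mcon_chain} to force the chain to terminate at some $\eta_m$, which must equal $\eta$. You instead apply the maximal condition directly, in its original ``every non-empty family of $L$-subgroups has a maximal member'' form, to the family $F$ of finitely generated $L$-subgroups of $\mu$ contained in $\eta$, and kill the possibility $\theta^{\ast}\subsetneq\eta$ by adjoining a single $L$-point $a_x$ with $a=\eta(x)>\theta^{\ast}(x)$ to a finite generating set of $\theta^{\ast}$. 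This is the standard Noetherian-style maximality argument: it bypasses both auxiliary results (\ref{mcon_subgp} and the chain reformulation \ref{mcon_chain}) as well as the implicit dependent-choice step hidden in the paper's ``continuing this process,'' at the small cost of checking non-emptiness of $F$ (your choice $\langle (a_0)_e\rangle$ works, since the tip of $\eta$ is attained at $e$) and that $\theta^{\ast\ast}=\langle {a_1}_{x_1},\dots,{a_n}_{x_n},a_x\rangle$ stays inside $\eta$ and strictly contains $\theta^{\ast}$, all of which you discharge correctly. Your converse is essentially identical to the paper's: form the union of a proper ascending chain via Lemma \ref{union_subgp} (legitimate because an upper well ordered lattice is a chain), use the hypothesis that the union $\eta$ is finitely generated, and apply Lemma \ref{fgen_chain} with $\eta$ as the ambient finitely generated $L$-group; the paper argues directly that every proper ascending chain is finite rather than by contradiction, an immaterial difference.
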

\begin{proof}
	Suppose that $\mu$ satisfies the maximal condition and let $\eta \in L(\mu)$. Let ${a_1}_{x_1} \in \eta$. If $\eta=\langle {a_1}_{x_1} \rangle$, then $\eta$ is finitely generated and we are done. Hence, suppose that $\eta \neq \langle {a_1}_{x_1} \rangle$. Then, there exists ${a_2}_{x_2} \in \eta$ such that ${a_2}_{x_2} \notin \langle {a_1}_{x_1} \rangle$. If $\eta = \langle {a_1}_{x_1}, {a_2}_{x_2} \rangle$, then we are done. Otherwise, there exists ${a_3}_{x_3} \in \eta$ such that ${a_3}_{x_3} \notin \langle {a_1}_{x_1}, {a_2}_{x_2} \rangle$. Continuing this process, we can define the $L$-subgroups $\eta_i$ of $\eta$ by choosing, as long as possible, $L$-points ${a_1}_{x_1}, {a_2}_{x_2}, \ldots$, of $\eta$ such that
	\[ \eta_i = \langle {a_1}_{x_1}, \ldots {a_i}_{x_i} \rangle \text{ and } {{a_{i+1}}_{x_{i+1}}} \notin \eta_i. \]
	Then, $\eta_1 \subsetneq \eta_2 \subsetneq \ldots$ is a proper ascending chain of $L$-subgroups of $\eta$. By Theorem \ref{mcon_subgp}, $\eta$ satisfies the maximal condition. Hence, by Theorem \ref{mcon_chain}, this chain terminates at some finite $m$. Thus, there does not exist any $b_y \in \eta$ such that $b_y \notin \langle {a_1}_{x_1}, \ldots {a_m}_{x_m} \rangle = \eta_m$. This implies $\eta = \eta_m$ and hence, $\eta$ is finitely generated.
	
	\noindent Conversely, suppose that $L$ is upper well ordered and let $\mu \in L(G)$ such that every $L$-subgroup of $\mu$ is finitely generated.
	Let $\eta_1 \subsetneq \eta_2 \subsetneq \ldots$ be a proper ascending chain of $L$-subgroups of $\mu$. Since $L$ is an upper well ordered lattice, by Lemma \ref{union_subgp}, $\eta = \cup \eta_i$ is an $L$-subgroup of $\mu$. By hypothesis, $\eta$ is finitely generated. Hence, by Lemma \ref{fgen_chain}, the above chain of $L$-subgroups must be finite. Hence, $\mu$ satisfies the maximal condition. 
\end{proof}

\section{Frattini $L$-Subgroup of Finitely Generated $L$-Group}

In this section, we explore some properties of maximal and Frattini $L$-subgroups of finitely generated $L$-subgroups. The Frattini $L$-subgroup of an $L$-group has been investigated by the authors in \cite{jahan_max}. Moreover, the authors have defined the non-generators of an $L$-group and established a relation between the non-generators and the Frattini $L$-subgroup of $\mu$. We recall these below:   

\begin{definition}(\cite{jahan_max})
	Let $\mu \in L(G)$. The Frattini $L$-subgroup $\Phi(\mu)$ of $\mu$ is defined to be the intersection of all the maximal $L$-subgroups of $\mu$.
	If $\mu$ has no maximal $L$-subgroups, then we define $\Phi(\mu) = \mu$.
\end{definition}

\begin{definition}(\cite{jahan_max})
	Let $\mu \in L(G)$. An $L$-point $a_x \in \mu$ is said to be a non-generator of $\mu$ if, whenever $\langle \eta, a_x \rangle = \mu$ for some $\eta \in L^{\mu}$, then $\langle \eta \rangle = \mu$.
\end{definition}

\begin{theorem}(\cite{jahan_max})
	\label{frat}
	Let $\mu \in L(G)$ and define $\lambda \in L^\mu$ as follows:
	\[ \lambda =  \bigcup \{a_x \mid a_x \text{ is a non-generator of } \mu \}. \]
	Then, $\lambda \in L(\mu)$ and $\lambda \subseteq \Phi(\mu)$. Moreover, if $L$ is an upper well ordered lattice, then $\lambda = \Phi(\mu)$.
\end{theorem}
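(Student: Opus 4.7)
The plan is to prove the three assertions in turn. First, to show $\lambda \in L(\mu)$, the key structural fact I would establish is that the family of non-generators is closed under ``pointwise product'' and inversion: namely, if $a_x$ and $b_y$ are non-generators of $\mu$, then so are $(a \wedge b)_{xy}$ and $a_{x^{-1}}$. For the product, note $a_x \circ b_y = (a \wedge b)_{xy}$ lies in every $L$-subgroup containing $\{a_x, b_y\}$; so if $\langle \eta, (a \wedge b)_{xy}\rangle = \mu$, then $\langle \eta, a_x, b_y\rangle = \mu$, and peeling off $a_x$ and then $b_y$ using their non-generator property yields $\langle \eta \rangle = \mu$. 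For inversion, $\langle a_{x^{-1}}\rangle = \langle a_x\rangle$ since $L$-subgroups are closed under inverses, whence $\langle \eta, a_x\rangle = \langle \eta, a_{x^{-1}}\rangle$ for every $\eta$. Complete distributivity of $L$ (together with $\lambda(x) \leq \mu(x)$, which is automatic since each non-generator $a_x$ satisfies $a \leq \mu(x)$) then yields $\lambda(xy) \geq \lambda(x) \wedge \lambda(y)$ and $\lambda(x^{-1}) = \lambda(x)$, so $\lambda \in L(\mu)$.

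Next, for $\lambda \subseteq \Phi(\mu)$, I would show that every non-generator $a_x$ lies in every maximal $L$-subgroup $M$ of $\mu$. If not, then $M(x) < a$, so $\langle M, a_x\rangle$ strictly contains $M$, and by maximality of $M$ equals $\mu$; the non-generator property of $a_x$ then forces $M = \langle M\rangle = \mu$, contradicting the properness of $M$. Intersecting over all maximal $L$-subgroups gives $\lambda \subseteq \Phi(\mu)$.

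For the converse, assume $L$ is upper well ordered and fix $a_x \in \Phi(\mu)$. I would show $a_x$ is a non-generator by contradiction. Suppose some $\eta \in L^\mu$ satisfies $\langle \eta, a_x\rangle = \mu$ while $\langle \eta\rangle \neq \mu$, and set $\mathcal{S} = \{\theta \in L(\mu) : \langle \eta\rangle \subseteq \theta,\ a_x \notin \theta\}$. This is non-empty: it contains $\langle \eta\rangle$, since $a_x \in \langle \eta \rangle$ would immediately give $\mu = \langle \eta, a_x\rangle = \langle \eta\rangle$. For any chain $C \subseteq \mathcal{S}$, Lemma \ref{union_subgp} (applicable because upper well-ordered lattices are chains) ensures $\bigcup C \in L(\mu)$, and upper well-ordering guarantees that the supremum $\bigvee_{\theta \in C}\theta(x)$ is attained by some $\theta_0 \in C$, so $(\bigcup C)(x) = \theta_0(x) < a$, placing $\bigcup C$ in $\mathcal{S}$. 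Zorn's lemma then yields a maximal $M \in \mathcal{S}$. I would then verify $M$ is a maximal $L$-subgroup of $\mu$: if $M \subsetneq \theta \subseteq \mu$ with $\theta \in L(\mu)$, then maximality in $\mathcal{S}$ forces $a_x \in \theta$, so $\theta \supseteq \langle M, a_x\rangle \supseteq \langle \eta, a_x\rangle = \mu$, hence $\theta = \mu$. Then $a_x \in \Phi(\mu) \subseteq M$, contradicting $a_x \notin M$.

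The main obstacle is the Zorn step in the converse: the chain-closure of $\mathcal{S}$ relies essentially on upper well-ordering (without attainability of the supremum, $a_x$ could sneak into $\bigcup C$ only in the limit), and one must also verify that the resulting Zorn-maximal $M$ is a \emph{proper} $L$-subgroup in the paper's sense — this follows from $M \supseteq \langle \eta \rangle$ inheriting distinct tip and tail together with $M \neq \mu$, which is guaranteed by $a_x \in \mu$ while $a_x \notin M$.
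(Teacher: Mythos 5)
First, context: this paper does not actually prove Theorem \ref{frat} --- it is recalled from \cite{jahan_max} --- so there is no in-paper proof to compare against, and your argument must stand on its own (your Zorn construction does, however, run parallel to the paper's own Lemma \ref{zrn}). Your first two parts are correct: closure of the non-generators under $a_x \circ b_y = (a \wedge b)_{xy}$ and under inversion, combined with complete distributivity of $L$, does give $\lambda \in L(\mu)$, and the argument that every non-generator lies in every maximal $L$-subgroup is exactly right (one cosmetic point: for general $L$, $a_x \notin M$ means $M(x) \not\geq a$ rather than $M(x) < a$, but you never use the strict inequality there).

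The genuine gap is in the converse, at the very point you flagged: properness of the Zorn-maximal $M$. In this paper a maximal $L$-subgroup must be \emph{proper}, i.e., non-constant and $\neq \mu$, and your justification --- that $M \supseteq \langle \eta \rangle$ ``inherits distinct tip and tail'' --- is unfounded: $\eta$ is an arbitrary element of $L^{\mu}$, and $\langle \eta \rangle$ may well be constant (e.g., $\eta \equiv 0$ generates the constant-$0$ subgroup), so nothing forces $M$ to be non-constant. For $x \neq e$ the gap is repairable: if $M$ were constant with value $c$, then $c < a \leq \mu(e)$ since $L$ is a chain, and the trivial-type subgroup equal to $\mu(e)$ at $e$ and to $c$ elsewhere lies in $\mathcal{S}$ and strictly contains $M$, contradicting Zorn-maximality; hence $M$ is non-constant. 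For $x = e$, however, no such repair exists and the step genuinely breaks: take $L = \{0 < 1\}$, $\mu = 1_{\{e\}}$ and $\eta \equiv 0$; then $\langle \eta, 1_e \rangle = \mu$ with $\langle \eta \rangle \neq \mu$, the set $\mathcal{S}$ consists only of the constant-$0$ subgroup, and the Zorn-maximal $M \equiv 0$ is not a proper $L$-subgroup. Indeed, in this instance $\mu$ has no maximal $L$-subgroups, so $\Phi(\mu) = \mu$ by convention, while $\mu(e)_e$ is not a non-generator --- so with the definitions as recalled in this paper the equality $\lambda = \Phi(\mu)$ itself fails at $e$ for such degenerate $\mu$. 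Consequently the case $x = e$ requires either a separate direct argument (comparing $\Phi(\mu)(e)$ and $\lambda(e)$ via the predecessor of $\mu(e)$, which exists by upper well-ordering) or whatever standing hypotheses \cite{jahan_max} imposes to exclude such $\mu$; as written, your proof of $\Phi(\mu) \subseteq \lambda$ is incomplete at precisely this point.
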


\begin{theorem}(\cite{jahan_max})
	\label{fra_nor}
	Let $L$ be an upper well ordered lattice and let $\mu$ be a normal $L$-subgroup of $G$. Then, $\Phi(\mu)$ is a normal $L$-subgroup of $\mu$.
\end{theorem}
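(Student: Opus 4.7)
The plan is to combine Theorem \ref{frat} with a conjugation-invariance argument for non-generators. Since $L$ is upper well ordered, Theorem \ref{frat} gives
\[ \Phi(\mu) = \bigcup\{a_x \mid a_x \text{ is a non-generator of } \mu\}, \]
so it suffices to establish the following closure property: if $a_x$ is a non-generator of $\mu$ and $y \in G$, then $a_{yxy^{-1}}$ is again a non-generator of $\mu$.

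To prove this closure I would fix $y \in G$ and work with the inner automorphism $\phi\colon G \to G$, $\phi(z) = yzy^{-1}$. The hypothesis $\mu \in NL(G)$ yields $\phi(\mu) = \mu = \phi^{-1}(\mu)$, and a direct computation gives $\phi(c_x) = c_{yxy^{-1}}$ and $\phi^{-1}(c_{yxy^{-1}}) = c_x$ for every $L$-point $c_x$. Now suppose $\langle \eta \cup \{a_{yxy^{-1}}\}\rangle = \mu$ for some $\eta \in L^\mu$. Applying $\phi^{-1}$ and invoking Theorem \ref{gen_hom},
\[ \mu = \phi^{-1}(\mu) = \phi^{-1}\bigl(\langle \eta \cup \{a_{yxy^{-1}}\}\rangle\bigr) = \langle \phi^{-1}(\eta) \cup \{a_x\}\rangle. \]
Since $\phi^{-1}(\eta) \in L^\mu$ and $a_x$ is a non-generator of $\mu$, this forces $\langle \phi^{-1}(\eta)\rangle = \mu$. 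Applying $\phi$ once more (again via Theorem \ref{gen_hom}) gives $\langle \eta\rangle = \phi(\langle \phi^{-1}(\eta)\rangle) = \phi(\mu) = \mu$, so $a_{yxy^{-1}}$ is a non-generator of $\mu$, as claimed.

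With the closure property in hand, for every $x, y \in G$,
\[ \Phi(\mu)(yxy^{-1}) = \bigvee\{a \mid a_{yxy^{-1}} \text{ is a non-generator of } \mu\} \geq \bigvee\{a \mid a_x \text{ is a non-generator of } \mu\} = \Phi(\mu)(x), \]
and applying the same argument to $y^{-1}$ at the point $yxy^{-1}$ yields the reverse inequality. Hence $\Phi(\mu)(yxy^{-1}) = \Phi(\mu)(x) \geq \Phi(\mu)(x) \wedge \mu(y)$, establishing $\Phi(\mu) \triangleleft \mu$ (in fact the stronger statement $\Phi(\mu) \in NL(G)$ drops out for free). The main delicate step is the bookkeeping around Theorem \ref{gen_hom}: one has to check that $\phi^{-1}(\eta) \subseteq \mu$ so that generation commutes with the preimage, and this is exactly the place where $\mu \in NL(G)$ is used.
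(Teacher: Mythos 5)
Your proof is correct, but there is nothing in this paper to compare it against: Theorem \ref{fra_nor} is recalled verbatim from \cite{jahan_max} and stated here without proof. Taken on its own merits, your argument is sound and complete. The route --- identify $\Phi(\mu)$ with the union $\lambda$ of non-generators via Theorem \ref{frat} (this is the only place the upper well ordered hypothesis enters), then show that the non-generator property is preserved under conjugation by transporting a generating configuration along the inner automorphism $\phi(z)=yzy^{-1}$ using Theorem \ref{gen_hom} --- checks out in every detail: $\phi(\mu)=\mu=\phi^{-1}(\mu)$ and $a_{yxy^{-1}}\in\mu$ are exactly where $\mu\in NL(G)$ is used, $\phi^{-1}(\eta\cup a_{yxy^{-1}})=\phi^{-1}(\eta)\cup a_x$ since preimage commutes with unions, $\phi^{-1}(\eta)\in L^{\mu}$ since $\phi^{-1}(\mu)=\mu$, and $\phi(\phi^{-1}(\eta))=\eta$ by bijectivity. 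One comparison worth recording: there is a shorter route that bypasses the non-generator machinery entirely. Since $\phi(\mu)=\mu$ for every inner automorphism $\phi$, the map $\eta\mapsto\phi(\eta)$ permutes the maximal $L$-subgroups of $\mu$ (if $\phi(\eta)\subseteq\theta\subseteq\mu$, then $\eta\subseteq\phi^{-1}(\theta)\subseteq\mu$, and properness is preserved by injectivity), and since for a bijection $\phi$ one has $\phi(\nu)(z)=\nu(\phi^{-1}(z))$, images under $\phi$ commute with arbitrary intersections; hence $\phi(\Phi(\mu))=\Phi(\mu)$, with the degenerate case $\Phi(\mu)=\mu$ immediate from $\mu\in NL(G)$. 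That argument yields $\Phi(\mu)\in NL(G)$, and so $\Phi(\mu)\lhd\mu$, \emph{without} assuming $L$ is upper well ordered --- suggesting that hypothesis is an artifact of the non-generator characterization rather than intrinsic to the statement. Both arguments deliver the stronger conclusion $\Phi(\mu)\in NL(G)$, as you observe at the end.
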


\begin{theorem}
	Let $L$ be an upper well ordered lattice and $\mu \in L(G)$ be a non-trivial finitely generated $L$-group. Then, $\mu$ has a maximal $L$-subgroup.
\end{theorem}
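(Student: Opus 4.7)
The plan is to apply Zorn's Lemma to the poset $\mathcal{S}$ of proper $L$-subgroups of $\mu$, ordered by inclusion. Non-triviality of $\mu$ guarantees that $\mathcal{S}$ is non-empty: for instance, taking $a_0$ and $t_0$ to be the tip and tail of $\mu$, the trivial $L$-subgroup $\mu_{t_0}^{a_0}$ lies in $\mathcal{S}$. Any maximal element of $\mathcal{S}$ is by definition a maximal $L$-subgroup of $\mu$, so it suffices to verify Zorn's chain condition.

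Let $\mathcal{C} \subseteq \mathcal{S}$ be a non-empty chain, and set $\eta = \bigcup_{\theta \in \mathcal{C}} \theta$. Since $L$ is upper well-ordered, it is in particular a chain, so Lemma \ref{union_subgp} yields $\eta \in L(\mu)$. Moreover, $\eta$ has distinct tip and tail: fixing any $\theta_0 \in \mathcal{C}$, one has $\mathrm{tip}(\eta) \geq \mathrm{tip}(\theta_0) > \mathrm{tail}(\theta_0) \geq \mathrm{tail}(\eta)$, since $\theta_0$ itself is proper. The crux is to verify that $\eta \neq \mu$.

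For this I would argue by contradiction, leveraging both hypotheses. Writing $\mu = \langle {a_1}_{x_1}, \ldots, {a_n}_{x_n}\rangle$ via finite generation, if $\eta = \mu$ then for each $j$ one has $\bigvee_{\theta \in \mathcal{C}} \theta(x_j) = \eta(x_j) \geq a_j$. Upper well-orderedness of $L$ ensures this supremum is attained, so some $\theta_j \in \mathcal{C}$ satisfies ${a_j}_{x_j} \in \theta_j$. The finitely many $\theta_1, \ldots, \theta_n$ lie in the chain $\mathcal{C}$ and hence admit a maximum $\theta^*$, which then contains every generator. It follows that $\mu \subseteq \theta^*$, forcing $\theta^* = \mu$ and contradicting $\theta^* \in \mathcal{S}$. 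This is essentially the mechanism already packaged in Lemma \ref{fgen_chain}. Hence $\eta \in \mathcal{S}$, $\mathcal{C}$ is bounded above in $\mathcal{S}$, and Zorn's Lemma supplies a maximal element of $\mathcal{S}$, which is the desired maximal $L$-subgroup of $\mu$. The main technical point---the only place where one really sweats---is precisely this last verification: without both finite generation and upper well-orderedness the union of a chain of proper $L$-subgroups could collapse onto $\mu$, so the two hypotheses of the theorem are used in an essentially intertwined way.
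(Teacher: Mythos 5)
Your proposal is correct and follows essentially the same route as the paper: Zorn's Lemma on the poset of proper $L$-subgroups, non-emptiness witnessed by the trivial $L$-subgroup $\mu_{t_0}^{a_0}$, closure of chains under union via Lemma \ref{union_subgp}, and properness of the union established by exactly the paper's argument (attainment of the supremum by upper well-orderedness, then a maximum element of the chain absorbing all finitely many generators). Your added check that the union has distinct tip and tail is a small point the paper leaves implicit, not a difference in method.
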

\begin{proof}
	Since $\mu$ is finitely generated, there exist finitely many $L$-points ${a_1}_{x_1}, \ldots, {a_n}_{x_n}$ such that $\mu = \langle {a_1}_{x_1}, \ldots, {a_n}_{x_n} \rangle$. Let $S$ be the set of all proper $L$-subgroups of $\mu$. Then, $S$ is non-empty since the trivial $L$-subgroup $\mu^{a_{0}}_{t_{0}}\in S$ where $a_0=\mu(e)$ and $t_0=\text{inf } \mu$.  Let $C = \{ \eta_i \}_{i \in I}$ be a chain in $S$. We claim that
	\[ \bigcup_{i \in I} \eta_i \in S. \]
	\noindent Firstly, we note that by Lemma \ref{union_subgp}, $\mathop{\bigcup}\limits_{i \in I} \eta_i \in L(\mu)$. Hence, we only need to show that $\mathop{\bigcup}\limits_{i \in I} \eta_i$ is a proper $L$-subgroup of $\mu$. Suppose, if possible, that $\mathop{\bigcup}\limits_{i \in I} \eta_i$ is not a proper $L$-subgroup of $\mu$. That is,
	\[\mu = \langle {a_1}_{x_1}, \ldots, {a_n}_{x_n} \rangle = \mathop{\bigcup}\limits_{i \in I} \eta_i.\]
	Then, ${a_1}_{x_1}, \ldots, {a_n}_{x_n} \in \mathop{\bigcup}\limits_{i \in I} \eta_i$. This implies that
	\[ \bigvee_{i \in I} \eta_i(x_j) \geq a_j \text{ for each } 1 \leq j \leq n. \]  
	\noindent Since $L$ is upper well ordered, for each $j$, there exists an $L$-subgroup, say $\eta_{{i}_{j}}$, in $C$ such that $\eta_{{i}_{j}}(x_j) \geq a_j$.  Moreover, since $C$ is a chain, without loss of generality, we may assume that 
	\[ \eta_{{i}_{1}} \subseteq \eta_{{i}_{2}} \subseteq \ldots \subseteq \eta_{{i}_{n}}.\]
	Thus,
	\[\mu = \langle {a_1}_{x_1}, \ldots, {a_n}_{x_n} \rangle = \mathop{\bigcup}\limits_{i \in I} \eta_i = \eta_{{i}_{n}},\]
	\noindent which contradicts the assumption that the elements of $C$ are proper $L$-subgroups of $\mu$. Hence, $\mathop{\bigcup}\limits_{i \in I} \eta_i$ is a proper $L$-subgroup of $\mu$. Consequently, $\mathop{\bigcup}\limits_{i \in I} \eta_i \in S$, and we conclude that every chain in $S$ has an upper bound in $S$. By Zorn's lemma, $S$  has a maximal element, which is the required maximal $L$-subgroup of $\mu$. 
\end{proof}
\begin{theorem}
	\label{fgn_frat}
	Let $L$ be an upper well ordered lattice and let $\mu$ be an $L$-group with a finitely generated Frattini $L$-subgroup $\Phi(\mu)$. Then, the only $L$-subgroup $\eta$ of $\mu$ such that 
	\[	\eta \circ \Phi(\mu) = \mu	\]
	is $\eta = \mu$.
\end{theorem}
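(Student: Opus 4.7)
The plan is to exploit Theorem \ref{frat}, which under upper well-orderedness identifies $\Phi(\mu)$ with the union $\lambda$ of all non-generator $L$-points of $\mu$. Since $\Phi(\mu)$ is finitely generated, write $\Phi(\mu) = \langle {a_1}_{x_1}, \ldots, {a_n}_{x_n} \rangle$. I would first show that each generator ${a_i}_{x_i}$ is itself a non-generator of $\mu$. Since ${a_i}_{x_i} \subseteq \Phi(\mu) = \lambda$ and $\lambda$ is a pointwise union of non-generator $L$-points, upper well-orderedness of $L$ guarantees that the supremum $\lambda(x_i)$ is attained by some non-generator $b_{x_i}$ with $b \geq a_i$. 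Then ${a_i}_{x_i} \subseteq b_{x_i}$, and an elementary monotonicity argument transfers the non-generator property downward: if $\langle \theta, {a_i}_{x_i} \rangle = \mu$ for $\theta \in L^\mu$, then $\mu \subseteq \langle \theta, b_{x_i} \rangle \subseteq \mu$, forcing $\langle \theta \rangle = \mu$ by the non-generator property of $b_{x_i}$.

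Next, I would convert the hypothesis $\eta \circ \Phi(\mu) = \mu$ into a generation equation. The $L$-subgroup $\langle \eta \cup \Phi(\mu) \rangle$ contains both $\eta$ and $\Phi(\mu)$, and since the set product of two $L$-subsets lying inside an $L$-subgroup is again contained in that $L$-subgroup (an easy pointwise calculation using closure under the group operation), we obtain $\mu = \eta \circ \Phi(\mu) \subseteq \langle \eta \cup \Phi(\mu) \rangle \subseteq \mu$, so equality holds. Combining this with $\Phi(\mu) = \langle {a_1}_{x_1}, \ldots, {a_n}_{x_n} \rangle$ yields
\[ \langle \eta, {a_1}_{x_1}, \ldots, {a_n}_{x_n} \rangle = \mu. \]

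Finally, I would peel off the generating $L$-points one at a time. Applying the non-generator property of ${a_n}_{x_n}$ to the $L$-subset $\eta \cup {a_1}_{x_1} \cup \ldots \cup {a_{n-1}}_{x_{n-1}}$ of $\mu$ gives $\langle \eta, {a_1}_{x_1}, \ldots, {a_{n-1}}_{x_{n-1}} \rangle = \mu$; iterating $n$ times produces $\langle \eta \rangle = \mu$. Since $\eta$ is already an $L$-subgroup of $\mu$, this forces $\eta = \langle \eta \rangle = \mu$.

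The main obstacle I anticipate is the first step. The union defining $\lambda$ ranges over $L$-points that are themselves non-generators, so an arbitrary $L$-point sitting beneath $\lambda$ need not a priori appear in that family. Upper well-orderedness is the essential ingredient that lets one realize $\lambda(x_i)$ as the value of an explicit non-generator at $x_i$, and thereby propagate the non-generator property to ${a_i}_{x_i}$; without this hypothesis the reduction from a set-product equation to the generator-erasing induction would break down.
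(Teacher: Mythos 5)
Your proposal is correct and follows essentially the same route as the paper's proof: convert $\eta \circ \Phi(\mu) = \mu$ into the generation equation $\langle \eta, {a_1}_{x_1}, \ldots, {a_n}_{x_n} \rangle = \mu$ and then erase the generating $L$-points one at a time via the non-generator characterization of $\Phi(\mu)$ in Theorem \ref{frat}. If anything, your first step is more careful than the paper's, which asserts directly that each ${a_i}_{x_i}$ is a non-generator; your argument---using upper well-orderedness to realize $\lambda(x_i)$ as an attained supremum, hence as the value of an actual non-generator $b_{x_i}$ with $b \geq a_i$, followed by the downward transfer of the non-generator property---supplies the justification that the paper leaves implicit.
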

\begin{proof}
	Since $\Phi(\mu)$ is finitely generated, there exist a finite number of $L$-points ${a_1}_{x_1}, \ldots, {a_n}_{x_n}$ in $\Phi(\mu)$ such that 
	\[	\Phi(\mu) = \langle {a_1}_{x_1}, \ldots, {a_n}_{x_n} \rangle.	\]
	Clearly,
	\begin{equation*}
	\begin{split}
	\mu &= \eta \circ \Phi(\mu)\\ 
	& \subseteq \langle \eta, \Phi(\mu) \rangle \\
	& = \langle \eta, {a_1}_{x_1}, \ldots {a_n}_{x_n} \rangle\\
	& \subseteq \mu.
	\end{split} 
	\end{equation*}
	Now, since $L$ is upper well ordered, by Theorem \ref{frat},
	\[ \Phi(\mu) = \bigcup \{ a_x \mid a_x \text{is a non-generator of } \mu \}. \]
	Therefore, each of ${a_i}_{x_i}$ is a non-generator of $\mu$. This implies
	\begin{equation*}
	\begin{split}
	\mu & = \langle \eta, {a_1}_{x_1}, \ldots {a_n}_{x_n} \rangle\\
	&= \langle \eta, {a_1}_{x_1} \ldots {a_{n-1}}_{x_{n-1}} \rangle \\
	& \quad \quad \vdots \\
	& = \langle \eta \rangle = \eta.
	\end{split}
	\end{equation*}
	Hence, $\eta = \mu$.		
\end{proof}

\begin{lemma}
	\label{zrn}
	Let $L$ be an upper well ordered lattice and let $\mu \in L(G)$. Suppose that $\theta \in L(\mu)$ and let $a_x$ be an $L$-point of $\mu$ such that $a_x \notin \theta$. Then, there exists $\eta \in L(\mu)$ such that $\eta$ is maximal with respect to the conditions $\theta \subseteq \eta$ and $a_x \notin \eta$.
\end{lemma}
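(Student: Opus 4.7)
The plan is a standard Zorn's lemma argument applied to the family
\[ S = \{ \eta \in L(\mu) \mid \theta \subseteq \eta \text{ and } a_x \notin \eta \}. \]
First I would observe that $S$ is non-empty since $\theta$ itself lies in $S$. The goal is then to verify that every chain in $S$ has an upper bound in $S$, whence Zorn's lemma produces the desired maximal element $\eta$.

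So let $C = \{\eta_i\}_{i \in I}$ be a chain in $S$. Since $L$ is upper well ordered, it is in particular a chain, and Lemma \ref{union_subgp} immediately gives $\bigcup_{i \in I} \eta_i \in L(\mu)$. The containment $\theta \subseteq \bigcup_{i \in I} \eta_i$ is automatic from $\theta \subseteq \eta_i$ for every $i$. Thus the only nontrivial point is to show that the $L$-point $a_x$ still escapes the union, that is, $\bigcup_{i \in I} \eta_i (x) < a$.

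This is precisely the step where upper well-orderedness is used. For each $i \in I$ we have $\eta_i(x) < a$ because $a_x \notin \eta_i$. Now
\[ \Bigl( \bigcup_{i \in I} \eta_i \Bigr)(x) = \bigvee_{i \in I} \eta_i(x), \]
and since $L$ is upper well ordered, the non-empty subset $\{\eta_i(x) \mid i \in I\}$ of $L$ contains its own supremum. Hence there is some $i_0 \in I$ with $\bigvee_{i \in I} \eta_i(x) = \eta_{i_0}(x) < a$, so $a_x \notin \bigcup_{i \in I} \eta_i$. Therefore $\bigcup_{i \in I} \eta_i \in S$ is an upper bound of $C$, and Zorn's lemma delivers a maximal $\eta \in S$.

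The argument is routine; the only subtle ingredient is realising that without upper well-orderedness one could have $\bigvee \eta_i(x) = a$ as an unattained supremum, which would spoil the property $a_x \notin \bigcup \eta_i$. This is the single place where the hypothesis on $L$ is indispensable, and I do not anticipate any further obstacle.
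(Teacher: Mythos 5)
Your proposal is correct and follows essentially the same route as the paper: Zorn's lemma applied to $S = \{ \nu \in L(\mu) \mid \theta \subseteq \nu,\ a_x \notin \nu \}$, with Lemma \ref{union_subgp} giving that the union of a chain is an $L$-subgroup and upper well-orderedness guaranteeing $a_x$ escapes the union. You merely spell out in more detail than the paper why the supremum $\bigvee_{i} \eta_i(x)$ is attained (and hence stays below $a$, using that an upper well ordered lattice is a chain), which is the correct justification of the step the paper states in one line.
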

\begin{proof}
	Consider the set $S = \{ \nu \in L(\mu) \mid \theta \subseteq \nu \text{ and } a_x \notin \nu \}$. Then, $S$ is non-empty since $\theta \in S$.  Let $C = \{ \theta_i \}_{i \in I}$ be a chain in $S$. We claim that 
	\[\bigcup_{i \in I} \theta_i \in S. \]  
	By Lemma \ref{union_subgp}, $\mathop{\bigcup}\limits_{i \in I} \theta_i \in L(\mu)$. Also, $\theta \subseteq \mathop{\bigcup}\limits_{i \in I} \theta_i$. Now, since $L$ is upper well ordered and $a_x \notin \theta_i$ for all $i \in I$, $a_x \notin \mathop{\bigcup}\limits_{i \in I} \theta_i$. Hence, $\mathop{\bigcup}\limits_{i \in I} \theta_i \in S$ so that every chain in  $S$ has an upper bound in $S$. Therefore, by Zorn's lemma, $S$ has a maximal element $\eta$. This proves the result.
\end{proof}

\begin{theorem}
	\label{max_prp}
	Let $L$ be an upper well ordered lattice and let $\mu$ be a finitely generated $L$-group. Let $\eta$ be a proper $L$-subgroup of $\mu$. Then, $\eta$ is contained in a maximal $L$-subgroup of $\mu$.
\end{theorem}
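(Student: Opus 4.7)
The plan is to apply Zorn's lemma to the poset
\[ S = \{ \theta \in L(\mu) \mid \eta \subseteq \theta \subsetneq \mu \}, \]
ordered by inclusion, and then to verify that any maximal element of $S$ is a maximal $L$-subgroup of $\mu$. The argument is parallel to the preceding existence proof of a maximal $L$-subgroup of a finitely generated $L$-group; imposing the additional containment $\eta \subseteq \theta$ does not disrupt the Zorn machinery. Since $\eta$ is itself a proper $L$-subgroup of $\mu$, we have $\eta \in S$, so $S$ is nonempty.

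The main step is to show that every chain $C = \{\theta_i\}_{i \in I}$ in $S$ has an upper bound in $S$, namely $\bigcup_{i \in I} \theta_i$. Since an upper well ordered lattice is a chain, Lemma \ref{union_subgp} gives $\bigcup_{i \in I} \theta_i \in L(\mu)$, and containment of $\eta$ is immediate. The substantive point is showing $\bigcup_{i \in I} \theta_i \neq \mu$. Here I would invoke finite generation: write $\mu = \langle (a_1)_{x_1}, \ldots, (a_n)_{x_n} \rangle$, and suppose for contradiction $\bigcup_{i \in I} \theta_i = \mu$. Then each $(a_j)_{x_j}$ is an $L$-point of the union, so $\bigvee_{i \in I} \theta_i(x_j) \geq a_j$. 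Upper well ordering guarantees that this supremum is attained, giving $i_j \in I$ with $(a_j)_{x_j} \in \theta_{i_j}$. Since $C$ is a chain and $\{i_1,\ldots,i_n\}$ is finite, one of the $\theta_{i_j}$, say $\theta_{i^*}$, contains all the others and hence contains every generator. This forces $\mu \subseteq \theta_{i^*}$, contradicting $\theta_{i^*} \subsetneq \mu$.

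Zorn's lemma then delivers a maximal element $\theta^*$ of $S$. To finish, I would verify that $\theta^*$ is a maximal $L$-subgroup of $\mu$ in the sense of the definition preceding Theorem \ref{nil_max}: if $\theta^* \subseteq \nu \subseteq \mu$ for some $\nu \in L(\mu)$, then either $\nu = \mu$ or $\nu \in S$, in which case maximality of $\theta^*$ in $S$ forces $\nu = \theta^*$. Since $\theta^*$ contains $\eta$ by construction, this produces the required maximal $L$-subgroup.

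The main obstacle is the chain step, i.e., showing $\bigcup_{i \in I} \theta_i \neq \mu$. The interplay of hypotheses is delicate: finite generation restricts us to finitely many $L$-points to capture, while upper well ordering ensures each of these $L$-points is actually realized by some $\theta_{i_j}$ rather than only appearing in the supremum. Without upper well ordering one could imagine generators ``just missed'' by every $\theta_i$, and without finite generation one could not pass from finitely many indices $i_j$ to a single $\theta_{i^*}$ via the chain property.
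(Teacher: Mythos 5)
Your proof is correct, but it takes a genuinely different route from the paper's. You make a single application of Zorn's lemma to $S=\{\theta\in L(\mu)\mid \eta\subseteq\theta\subsetneq\mu\}$, and the load-bearing step --- properness of the union $\bigcup_{i\in I}\theta_i$ of a chain in $S$ --- is exactly where you spend both hypotheses: finite generation reduces matters to capturing the generators $({a_j})_{x_j}$, and upper well ordering makes each supremum $\bigvee_{i\in I}\theta_i(x_j)$ attained, so every generator lies in some $\theta_{i_j}$ and, the chain being totally ordered with only finitely many indices involved, all of them lie in a single $\theta_{i^*}$, forcing $\theta_{i^*}=\mu$, a contradiction. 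This is in effect the paper's proof of its earlier existence theorem (that a non-trivial finitely generated $L$-group over an upper well ordered lattice has a maximal $L$-subgroup) upgraded to carry the constraint $\eta\subseteq\theta$ through the whole argument; indeed, your version essentially recovers that theorem by taking $\eta$ to be the trivial $L$-subgroup. The paper instead argues iteratively via Lemma \ref{zrn}: it picks the first generator $({b_1})_{y_1}\notin\eta$, takes $\theta_1$ maximal with $\eta\subseteq\theta_1$ and $({b_1})_{y_1}\notin\theta_1$ (here the Zorn step needs only upper well ordering, since avoidance of a single $L$-point passes to chain unions when suprema are attained), notes that any $L$-subgroup strictly containing $\theta_1$ must contain $({b_1})_{y_1}$, and repeats; finite generation enters only to terminate this outer loop in at most $n$ steps, at which point $\langle\theta_i,({b_i})_{y_i}\rangle=\mu$ certifies that $\theta_i$ is maximal. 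So the two proofs use the same ingredients in different places: yours is the more direct transcription of the classical group-theoretic argument, invoking Lemma \ref{union_subgp} once inside one Zorn application, while the paper factors the transfinite content into the reusable point-avoidance Lemma \ref{zrn} and keeps the role of finite generation purely finitistic. One shared loose end, not a defect peculiar to your argument: neither proof checks that the maximal element produced is non-constant, as the paper's definition of a proper (hence maximal) $L$-subgroup formally requires; since it contains the proper $L$-subgroup $\eta$, this is evidently the paper's implicit convention.
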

\begin{proof}
	Let $\mu = \langle {a_1}_{x_1}, \ldots , {a_n}_{x_n} \rangle$. Since $\eta \subsetneq \mu $, let ${b_1}_{y_1}$ be the first of ${a_1}_{x_1}, \ldots, {a_n}_{x_n}$ such that ${b_1}_{y_1} \notin \eta$. By Lemma \ref{zrn}, there exists $\theta_1 \in L(\mu)$ such that $\theta_1$ is maximal with respect to the conditions $\eta \subseteq \theta_1$ and ${b_1}_{y_1} \notin \theta_1$. Hence, any $L$-subgroup of $\mu$ that contains $\theta_1$ also contains ${b_1}_{y_1}$. Let $\langle \theta_1, {b_1}_{y_1} \rangle = \nu_1$. If $\nu_1 = \mu$, then $\theta_1$ is  the required maximal $L$-subgroup. If not, let ${b_2}_{y_2}$ be the first of ${a_1}_{x_n}, \ldots, {a_n}_{x_n}$ not contained in $\nu_1$. Then, there exists $\theta_2 \in L(\mu)$ that is maximal with respect to the conditions that $\nu_1 \subseteq \theta_2$ and ${b_2}_{y_2} \notin \theta_2$. Since $\mu = \langle {a_1}_{x_1}, \ldots, {a_n}_{x_n} \rangle$, by continuing this process, we obtain $L$-points ${b_1}_{y_1}, {b_2}_{y_2}, \ldots, {b_i}_{y_i}$ and a finite chain of $L$-subgroups
	\[ \theta_1 \subseteq \theta_2 \subseteq \ldots \subseteq \theta_i \]
	such that $\langle \theta_i, {b_i}_{y_i} \rangle = \mu$. Thus, $\theta_i$ is the required maximal $L$-subgroup of $\mu$ containing $\eta$.  
\end{proof}

\begin{lemma}
	\label{prp_pro}
	Let $L$ be an upper well ordered lattice and let $\mu$ be a finitely generated $L$-group. Let $\eta \in NL(\mu)$. Suppose that $\eta$, $\Phi(\mu)$ and $\mu$ have the same tips. Then, $\eta \subseteq \Phi(\mu)$ if and only if there is no proper subgroup $\theta$ of $\mu$ such that $\eta \circ \theta = \mu$.
\end{lemma}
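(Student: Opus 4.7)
The plan is to prove the two implications separately. The forward direction will be a short application of Theorem \ref{max_prp}, while the reverse direction requires constructing a specific proper $L$-subgroup whose set product with $\eta$ recovers $\mu$.

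For the forward direction, I would assume $\eta \subseteq \Phi(\mu)$ and argue by contradiction: let $\theta$ be a proper $L$-subgroup of $\mu$ with $\eta \circ \theta = \mu$. Since $\mu$ is finitely generated and $\theta$ is proper, Theorem \ref{max_prp} provides a maximal $L$-subgroup $M$ of $\mu$ containing $\theta$. Then $\eta \subseteq \Phi(\mu) \subseteq M$ by the definition of the Frattini $L$-subgroup, and since $M$ is an $L$-subgroup the closure property $M \circ M \subseteq M$ holds. Combining these inclusions yields
\[
\mu \; = \; \eta \circ \theta \; \subseteq \; M \circ M \; \subseteq \; M \; \subsetneq \; \mu,
\]
which is the desired contradiction.

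For the reverse direction, I would argue the contrapositive: assume $\eta \not\subseteq \Phi(\mu)$, so that some maximal $L$-subgroup $M$ of $\mu$ fails to contain $\eta$. The first observation is that, since $\Phi(\mu)$ equals the intersection of all maximal $L$-subgroups and is assumed to have tip equal to that of $\mu$, every maximal $L$-subgroup of $\mu$, and in particular $M$, must itself have tip $\operatorname{tip}(\mu)$. Hence $\operatorname{tip}(\eta) = \operatorname{tip}(M) = \operatorname{tip}(\mu)$. The goal then is to show that $\theta = M$ itself satisfies $\eta \circ M = \mu$, which would contradict the hypothesis because $M$ is a proper $L$-subgroup.

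The key technical step is verifying that $\eta \circ M$ is an $L$-subgroup of $\mu$ containing both $\eta$ and $M$. The containments $\eta \subseteq \eta \circ M$ and $M \subseteq \eta \circ M$ will follow from the pointwise inequalities $(\eta \circ M)(x) \geq \eta(x) \wedge M(e)$ and $(\eta \circ M)(x) \geq \eta(e) \wedge M(x)$ once the common-tip identifications are invoked. To see that $\eta \circ M$ is itself an $L$-subgroup, I would use normality of $\eta$ in $\mu$ to obtain $\eta \circ M = M \circ \eta$, and then apply associativity of the set product to compute
\[
(\eta \circ M) \circ (\eta \circ M) \; = \; \eta \circ (M \circ \eta) \circ M \; = \; \eta \circ \eta \circ M \circ M \; \subseteq \; \eta \circ M,
\]
with closure under inverses following by a symmetric manipulation. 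Since $\eta \not\subseteq M$ but $\eta \subseteq \eta \circ M$, we have $M \subsetneq \eta \circ M \subseteq \mu$; maximality of $M$ then forces $\eta \circ M = \mu$, exhibiting the proper $L$-subgroup $M$ with $\eta \circ M = \mu$ and yielding the required contradiction. The main obstacle I anticipate is precisely this verification that $\eta \circ M$ is an $L$-subgroup properly containing $M$, since it is the step where normality of $\eta$ and the common-tip hypothesis are both indispensable.
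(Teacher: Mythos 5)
Your proposal is correct and follows essentially the same route as the paper: the forward direction applies Theorem \ref{max_prp} to embed $\theta$ in a maximal $L$-subgroup containing $\eta \circ \theta$, and the reverse direction takes a maximal $\nu$ with $\eta \nsubseteq \nu$, uses the tip hypothesis to get $\nu \subsetneq \eta \circ \nu \subseteq \mu$, and invokes maximality to force $\eta \circ \nu = \mu$. The only difference is that you spell out the verification that $\eta \circ M$ is an $L$-subgroup (via $\eta \circ M = M \circ \eta$ from normality and associativity of the set product), a step the paper's proof uses implicitly when applying maximality.
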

\begin{proof}
	Let $\eta \subseteq \Phi(\mu)$. Let $\theta$ be a proper $L$-subgroup of $\mu$. Then, by Theorem \ref{max_prp}, there exists a maximal $L$-subgroup $\nu$ of $\mu$ such that $\theta \subseteq \nu$. Then, $\eta \subseteq \Phi(\mu) \subseteq \nu$ and $\theta \subseteq \nu$, which implies that 
	\[ \eta \circ \theta \subseteq \nu \subsetneq \mu. \]
	Hence, there does not exist any proper $L$-subgroup $\theta$ of $\mu$ such that $\eta \circ \theta = \mu$.
	Conversely, suppose that there does not exist any proper $L$-subgroup $\theta$ of $\mu$ such that $\eta \circ \theta = \mu$. If $\eta \nsubseteq \Phi(\mu)$, then there exists a maximal $L$-subgroup $\nu$ of $\mu$ such that $\eta \nsubseteq \nu$. Since tip of $\Phi(\mu)$ equals tip of $\mu$, $\text{tip}~\nu = \text{tip}~\mu$. Therefore,
	\[ \nu \subsetneq \eta \circ \nu \subseteq \mu. \]
	Since $\nu$ is a maximal $L$-subgroup of $\mu$, we must have $\eta \circ \nu = \mu$, which contradicts our assumption. Hence, $\eta \subseteq \Phi(\mu)$.  
\end{proof}

\begin{lemma}
	\label{int_pro}
	Let $\eta$, $\theta$, $\sigma \in L(\mu)$ such that $\sigma \subseteq \eta$. Then,
	\[	\eta \cap (\theta \circ \sigma) = (\eta \cap \theta) \circ \sigma.	\]
\end{lemma}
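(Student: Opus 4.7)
The plan is to prove the identity by showing the two inclusions separately, treating it as the $L$-group analogue of the classical Dedekind modular law.

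For the easy inclusion $(\eta \cap \theta) \circ \sigma \subseteq \eta \cap (\theta \circ \sigma)$, I would first note that set product is monotone, so $(\eta \cap \theta) \circ \sigma \subseteq \theta \circ \sigma$ trivially. To show containment in $\eta$, I would use the general fact that if $\alpha, \beta$ are $L$-subsets of an $L$-subgroup $\eta$, then $\alpha \circ \beta \subseteq \eta$: indeed, for any $x \in G$,
\[
(\alpha \circ \beta)(x) = \bigvee_{x=yz}\bigl(\alpha(y) \wedge \beta(z)\bigr) \leq \bigvee_{x=yz}\bigl(\eta(y) \wedge \eta(z)\bigr) \leq \eta(x),
\]
by the defining inequality of $\eta \in L(\mu)$. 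Applying this with $\alpha = \eta \cap \theta \subseteq \eta$ and $\beta = \sigma \subseteq \eta$ (using the hypothesis $\sigma \subseteq \eta$), I get $(\eta \cap \theta) \circ \sigma \subseteq \eta$, which together with the first observation gives the desired containment.

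For the reverse inclusion $\eta \cap (\theta \circ \sigma) \subseteq (\eta \cap \theta) \circ \sigma$, I would compute pointwise. For any $x \in G$, using that $L$ is completely distributive,
\[
\bigl(\eta \cap (\theta \circ \sigma)\bigr)(x) = \eta(x) \wedge \bigvee_{x=yz}\bigl(\theta(y) \wedge \sigma(z)\bigr) = \bigvee_{x=yz}\bigl(\eta(x) \wedge \theta(y) \wedge \sigma(z)\bigr).
\]
The key step is then to show that for each factorization $x=yz$, one has $\eta(x) \wedge \sigma(z) \leq \eta(y)$. Since $\sigma \subseteq \eta$, $\sigma(z) \leq \eta(z) = \eta(z^{-1})$; since $\eta \in L(\mu)$ and $y = xz^{-1}$, I get $\eta(y) \geq \eta(x) \wedge \eta(z^{-1}) \geq \eta(x) \wedge \sigma(z)$, as required. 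Substituting this inequality into the display above yields
\[
\bigl(\eta \cap (\theta \circ \sigma)\bigr)(x) \leq \bigvee_{x=yz}\bigl(\eta(y) \wedge \theta(y) \wedge \sigma(z)\bigr) = \bigl((\eta \cap \theta) \circ \sigma\bigr)(x),
\]
which completes the proof.

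The main (mild) obstacle is making sure the interchange of $\wedge$ with the supremum over factorizations is justified; this is exactly where complete distributivity of $L$, assumed throughout the paper, is used. Once that is in place, the entire argument reduces to the $L$-subgroup condition applied to $y = x z^{-1}$, so no additional hypotheses beyond $\sigma \subseteq \eta$ and $\eta \in L(\mu)$ are needed (in particular, $\theta$ need only be an $L$-subset for the argument to go through, although the statement assumes it to be an $L$-subgroup).
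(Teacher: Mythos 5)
Your proof is correct and takes essentially the same route as the paper's: both inclusions are verified pointwise using $\sigma \subseteq \eta$, the subgroup inequality for $\eta$ applied to $y = xz^{-1}$ (equivalently $\eta(x) \wedge \eta(y^{-1}) \leq \eta(xy^{-1})$), and distributivity of $L$ to pull $\eta(x)$ inside the supremum over factorizations. Your packaging of the easy inclusion as monotonicity of $\circ$ plus closure of $\eta$ under set products of its $L$-subsets is merely a tidier organization of the paper's direct pointwise chain, and your side remark that $\theta$ need only be an $L$-subset is accurate.
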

\begin{proof}
	Let $x \in G$. Then,
	\begin{equation*}
	\begin{split}
	(\eta \cap (\theta \circ \sigma))(x) & = \eta(x) \wedge \left\{	\bigvee_{y \in G} \{ \theta(xy^{-1}) \wedge \sigma(y) \right\} \\
	& = \bigvee_{y \in G} \left\{ \eta(x) \wedge \theta(xy^{-1}) \wedge \eta(y^{-1}) \wedge \sigma(y) \right\} \\
	& \leq \bigvee_{y \in G} \{ \eta(xy^{-1}) \wedge \theta(xy^{-1}) \wedge \sigma(y) \} \\
	& = \bigvee_{y \in G} \left\{ (\eta \cap \theta)(xy^{-1}) \wedge \sigma(y) \right\} \\
	& = ((\eta \cap \theta) \circ \sigma)(x).
	\end{split}
	\end{equation*} 
	Hence, $\eta \cap (\theta \circ \sigma) \subseteq \eta \cap \theta) \circ \sigma$.
	Conversely, for all $y$, $z \in G$ such that $x=yz$,
	\begin{equation*}
	\begin{split}
	(\eta \cap \theta)(y) \wedge \sigma(z) & = \eta(y) \wedge \theta(y) \wedge \sigma(z) \\
	& = \eta(y) \wedge \theta(y) \wedge \eta(z) \wedge \sigma(z) \\
	& \leq \eta(x) \wedge (\theta(y) \wedge \sigma(z)). \\
	\end{split}
	\end{equation*}
	Taking supremum over all such $y$ and $z$, we get
	\begin{equation*}
	\begin{split}
	((\eta \cap \theta) \circ \sigma))(x) & = \bigvee_{x=yz} \{ (\eta \cap \theta)(y) \wedge \sigma(z) \} \\
	& \leq \bigvee_{x=yz} \{ \eta(x) \wedge \{ \theta(y) \wedge \sigma(z) 	\} \\
	& = \eta(x) \wedge (\bigvee_{x=yz} \{ \theta(y) \wedge \sigma(z) \} ) \\
	& = (\eta \cap (\theta \circ \sigma))(x).
	\end{split}
	\end{equation*}
	Hence,
	\[	\eta \cap (\theta \circ \sigma) = (\eta \cap \theta) \circ \sigma.	\]  	 
\end{proof}

\begin{theorem}
	Let $L$ be an upper well ordered lattice and $\mu$ be a finitely generated $L$-group. Let $\eta \in L(\mu)$ and $\sigma \in NL(\mu)$ such that $\eta, \sigma, \Phi(\eta)$ and $\Phi(\mu)$ have tips equal to $\text{tip}~\mu$. If $\sigma \subseteq \Phi(\eta)$, then $\sigma \subseteq \Phi(\mu)$.
\end{theorem}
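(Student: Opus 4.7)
The plan is to argue by contradiction using Lemma \ref{prp_pro} twice. First I would apply Lemma \ref{prp_pro} to the normal $L$-subgroup $\sigma$ of $\mu$: since $\sigma\in NL(\mu)$, $L$ is upper well ordered, $\mu$ is finitely generated, and the tips of $\sigma$, $\Phi(\mu)$, $\mu$ coincide, the lemma reduces the goal $\sigma\subseteq\Phi(\mu)$ to showing that no proper $L$-subgroup $\theta$ of $\mu$ satisfies $\sigma\c\theta=\mu$. I would then assume, for contradiction, that such a $\theta$ exists.

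The heart of the argument is to transfer this factorization into $\eta$. Using the normality of $\sigma$ in $\mu$, set products commute, so $\mu=\sigma\c\theta=\theta\c\sigma$. Because $\sigma\subseteq\Phi(\eta)\subseteq\eta$, I can apply Lemma \ref{int_pro} to intersect both sides with $\eta$:
\[
\eta \;=\; \eta\cap\mu \;=\; \eta\cap(\theta\c\sigma) \;=\; (\eta\cap\theta)\c\sigma.
\]
I would then verify that $\eta\cap\theta$ is a proper $L$-subgroup of $\eta$: otherwise $\eta\subseteq\theta$, which would force $\sigma\subseteq\eta\subseteq\theta$ and hence $\mu=\sigma\c\theta\subseteq\theta\subsetneq\mu$, a contradiction. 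Since $\sigma\cap\eta=\sigma$, Proposition \ref{int_nor} yields $\sigma\in NL(\eta)$, and so $(\eta\cap\theta)\c\sigma=\sigma\c(\eta\cap\theta)$. Thus $\sigma\c(\eta\cap\theta)=\eta$ with $\eta\cap\theta$ properly contained in $\eta$.

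The finishing blow is the second application of Lemma \ref{prp_pro}, now to $\sigma$ inside $\eta$: the tip hypotheses on $\sigma$, $\Phi(\eta)$, $\eta$ are exactly those supplied by the theorem, and the proper factorization $\sigma\c(\eta\cap\theta)=\eta$ contradicts $\sigma\subseteq\Phi(\eta)$. This yields the desired contradiction and gives $\sigma\subseteq\Phi(\mu)$. The main obstacle I expect is precisely this second invocation of Lemma \ref{prp_pro}, since its forward direction relies on Theorem \ref{max_prp} and hence on the ambient $L$-group being finitely generated; the plan therefore needs either a short argument that $\eta$ inherits finite generation from $\mu$ in the present setting, or a direct verification that every proper $L$-subgroup of $\eta$ sits inside some maximal $L$-subgroup of $\eta$---enough to rerun the forward direction of Lemma \ref{prp_pro} for $\eta$. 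Once this point is handled, the remaining bookkeeping (commuting set products under normality, applying Lemma \ref{int_pro}, and checking propriety of $\eta\cap\theta$) is straightforward.
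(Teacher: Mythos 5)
Your proposal is essentially the paper's own proof: assume $\sigma \nsubseteq \Phi(\mu)$, use Lemma \ref{prp_pro} to produce a proper $\theta \in L(\mu)$ with $\theta \circ \sigma = \mu$, intersect with $\eta$ via Lemma \ref{int_pro} to get $\eta = (\eta \cap \theta) \circ \sigma$, and apply Lemma \ref{prp_pro} a second time inside $\eta$ (the paper phrases this step as concluding $\eta \cap \theta = \eta$, whence $\sigma \subseteq \eta \subseteq \theta$ and $\mu = \theta \circ \sigma = \theta$, contradicting the propriety of $\theta$). The obstacle you honestly flag --- that this second invocation needs $\eta$ itself to be finitely generated, which does not follow from $\mu$ being finitely generated (Theorem \ref{max_fin} only transfers finite generation from the maximal condition, not from $\mu$ alone) --- is a genuine subtlety, but the paper's proof makes exactly the same unremarked invocation of Lemma \ref{prp_pro} for $\eta$, so your attempt matches the published argument on this point as well.
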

\begin{proof}
	Suppose that $\sigma \nsubseteq \Phi(\mu)$. Then, by Lemma \ref{prp_pro}, there exists a proper $L$-subgroup $\theta$ of $\mu$ such that $\mu = \theta \circ \sigma$. Since $\text{tip}~\mu = \text{tip}~\sigma$, $\text{tip}~\theta = \text{tip}~\mu$. Now, since $\sigma \subseteq \Phi(\eta)$, $\sigma \subseteq \eta \subseteq \mu = \theta \circ \sigma$. Therefore, by Lemma \ref{int_pro},
	\[ \eta = \eta \cap \mu = \eta \cap (\theta \circ \sigma) = (\eta \cap \theta) \circ \sigma. \]
	Again, since $\sigma \subseteq \Phi(\eta)$, by Lemma \ref{prp_pro}, $\eta \cap \theta = \eta$. Therefore, $\sigma \subseteq \eta \subseteq \theta$, which imples that $\mu = \theta \circ \sigma = \theta$,
	which contradicts the assumption that $\theta$ is a proper $L$-subgroup of $\mu$. Hence, we must have $\sigma \subseteq \Phi(\mu)$.
\end{proof}

\section*{Acknowledgements}
The second author of this paper was supported by the Junior Research Fellowship jointly funded by CSIR and UGC, India during the course of development of this paper.

\end{document}